\numberwithin{equation}{section}
\newtheorem{thm}{Theorem}[section]
\newtheorem{lemma}[thm]{Lemma}
\newtheorem{prop}[thm]{Proposition}
\newtheorem{cor}[thm]{Corollary}
{\theorembodyfont{\rmfamily}
\newtheorem{defn}[thm]{Definition}
\newtheorem{example}[thm]{Example}

\newtheorem{rmk}[thm]{Remark}
}
\newcommand{\qed}{\hfill \mbox{\raggedright \rule{.07in}{.1in}}}
\newenvironment{proof}{\vspace{1ex}\noindent{\bf
Proof}\hspace{0.5em}}{\hfill\qed\vspace{1ex}}
\newcommand{\R}{{\mathbb R}}
\newcommand{\N}{{\mathbb N}}
\newcommand{\E}{{\mathbb E}}
\newcommand{\PP}{{\mathbb P}}
\newcommand{\bbS}{{\mathbb S}}
\newcommand{\cI}{{\mathcal I}}
\newcommand{\cJ}{{\mathcal J}}
\newcommand{\cM}{{\mathcal M}}
\newcommand{\cN}{{\mathcal N}}
\newcommand{\cS}{{\mathcal S}}
\newcommand{\tcS}{{\widetilde{\mathcal S}}}
\newcommand{\tD}{{\widetilde D}}
\newcommand{\te}{{\tilde e}}
\newcommand{\tL}{{\tilde L}}
\newcommand{\tU}{{\widetilde U}}
\newcommand{\Leb}{\operatorname{Leb}}
\newcommand{\sgn}{\operatorname{sgn}}
\newcommand{\disc}{\operatorname{Disc}}
\newcommand{\diam}{\operatorname{diam}}
\newcommand{\range}{\operatorname{range}}
\begin{document}

\title{Convergence to decorated L\'evy processes in non-Skorohod topologies for dynamical systems}

\author{Ana Cristina Moreira Freitas
\thanks{Centro de Matem\'{a}tica \& Faculdade de Economia da Universidade do Porta, Rua Dr. Roberto Frias,  4200-464 Porto, Portugal} 
\and
Jorge Milhazes Freitas
\thanks{Centro de Matem\'{a}tica \& Faculdade de Ci\^encias da Universidade do Porto, Rua do Campo Alegre 687, 4169-007 Porto, Portugal}
\and
Ian Melbourne
\thanks{Mathematics Institute, University of Warwick, Coventry, CV4 7AL, UK}
\and
Mike Todd
\thanks{Mathematical Institute, University of St Andrews,
North Haugh, St Andrews, KY16 9SS, Scotland}
}

%\date{\today}

\maketitle

\begin{abstract}
We present a general framework for weak convergence to decorated L\'evy processes in enriched spaces of c\`adl\`ag functions for vector-valued processes arising in deterministic systems. Applications include uniformly expanding maps and unbounded observables as well as nonuniformly expanding/hyperbolic maps with bounded observables. The latter includes intermittent maps and dispersing billiards with flat cusps.
In many of these examples, convergence fails in all of the Skorohod topologies.
Moreover, the enriched space picks up details of excursions that are not recorded by Skorohod or Whitt topologies.
\end{abstract}

 \section{Introduction} 
 \label{sec-intro}

The classical central limit theorem (CLT) asserts convergence to a normal distribution with standard diffusion rate $n^{1/2}$. Donsker's weak invariance principle (WIP) gives weak convergence to the corresponding Brownian motion.
Brownian motion has continuous sample paths, so weak convergence can be taken in the space of continuous functions with the supremum norm.

There has been much interest across the physical sciences
(see for example~\cite{BL,GaspardWang88, Gouezel04, KGSSR, MZ15, Metzler, PVHS, SamorodnitskyTaqqu, Swinney, Whitt}) 
in ``anomalous diffusion'' and in particular in superdiffusive rates $n^{1/\alpha}$, $\alpha\in(0,2)$, with convergence to an $\alpha$-stable L\'evy process.
Such process have infinite variance and a dense set of discontinuities, exhibiting jumps of all sizes. It is customary to consider weak convergence in the space $D$ of c\`adl\`ag functions (right continuous with left limits).
Skorohod~\cite{Skorohod56} introduced various topologies for convergence in $D$,
and 
for a long time the Skorohod $\cJ_1$ topology was the topology of choice. 

Eventually, it became apparent that convergence in the $\cJ_1$ topology is too restrictive.
The first such examples were~\cite{AvramTaqqu92,BasrakKrizmanicSegers12} in the probability literature and~\cite{MZ15} in the dynamical systems literature, where convergence fails in $\cJ_1$ but holds in the weaker Skorohod $\cM_1$ topology on $D$.
Based on~\cite{AvramTaqqu92}, Whitt~\cite[p.~xii]{Whitt} and 
Jakubowski~\cite{Jakubowski07} 
respectively wrote:
{\advance\leftmargini -1.3em
\begin{quote}
Thus, while the $\cJ_1$ topology sometimes cannot be used, the
$\cM_1$ topology can almost always be used. Moreover, the extra strength of the $\cJ_1$
topology is rarely exploited. Thus, we would be so bold as to suggest that, \emph{if only
one topology on the function space D is to be considered, then it should be the $\cM_1$
topology}.
\end{quote}
}
{\advance\leftmargini -1.3em
\begin{quote}
All these reasons bring interest also to the
weaker Skorokhod's topologies $\cJ_2$, $\cM_1$ and $\cM_2$. Among them practically only
the topology $\cM_1$ proved to be useful.
\end{quote}
}

On the other hand, Whitt~\cite{Whitt} anticipated the need to move beyond the Skorohod topologies, and furthermore to replace $D$ by an enriched space of ``decorated'' c\`adl\`ag functions. The enriched spaces in~\cite{Whitt} were denoted by $E$ and $F$. These spaces permit weak convergence in situation where convergence fails in any Skorohod topology. Moreover, 
they keep track of various details which are lost in the usual Skorohod topologies.

The picture changed further as a result of the papers~\cite{BasrakKrizmanic14,MV20}
which gave first examples where the $\cM_2$ topology is the appropriate one.
Moreover, the examples considered in~\cite{MV20}, namely dispersing billiards with flat cusps, demonstrate emphatically that none of the Skorohod topologies are adequate in general. 
Examples from one-dimensional dynamics where convergence again fails in all Skorohod topologies are given in~\cite{Freitas2Toddsub}.

In this paper, we prove a general result on weak convergence to $\alpha$-stable L\'evy processes in a decorated 
c\`adl\`ag space. Our framework is general enough to incorporate all known examples arising in uniformly and nonuniformly hyperbolic dynamics. In particular, we cover intermittent maps and billiards with flat cusps (bounded observables) and uniformly expanding maps (unbounded observables). Our results are formulated using a space $F'$ introduced in~\cite{Freitas2Toddsub} which improves upon the spaces in~\cite{Whitt} and achieves three 
goals: 
\begin{itemize}
\item[(i)] We obtain weak convergence results when none are possible using the Skorohod topologies. For billiards with one flat cusp as considered in~\cite{MV20}, we obtain convergence in $F'$ to a decorated L\'evy process for typical H\"older observables, whereas such a result is false for the Skorohod topologies~\cite{JMPVZ21}. 
\item[(ii)] We keep track of fine behaviour concerning excursions during jumps, as partially illustrated in Figure~\ref{fig:M1}. This includes behaviour that is not detected by the spaces in~\cite{Whitt}.
\item[(iii)] Restricting to $d=1$ for convenience, the Skorohod topologies have the property that important functionals such as $\psi(u)(t)=\sup_{s\in[0,t]}u(s)$ are continuous on $D$ and hence preserve weak convergence. Many of our examples in Section~\ref{sec:informal} have ``overshoots'' (as illustrated in Figures~\ref{fig:ex5} and~\ref{fig:cusps}) meaning that weak convergence cannot be preserved by such functionals. Consequently, the processes in such examples cannot converge in a Skorohod topology (nor in any topology on $D$ for which such functionals are continuous).  However, such functionals $\psi$ are continuous on the enriched space $F'$ (see Remark~\ref{rmk:psi}) and so we obtain a large class of functionals that preserve weak convergence of enriched processes.
\end{itemize}

The approach in this paper, building on~\cite{Freitas2Toddsub}, is to consider decorated L\'evy process obtained by attaching suitable profiles $P:[0,1]\to\R^d$ that keep precise track of the excursion during each jump.

\begin{rmk}
For intermittent map examples, considered in~\cite{MZ15} for scalar observables and~\cite{CFKM20} for vector-valued observables, our general theory applies when the observable $v$ is nonvanishing at at least one of the most neutral fixed points. 

For dispersing billiards with flat cusps, we require moreover that  the excursions at all of the flattest cusps are in distinct directions. The general case is the topic of work in progress.
\end{rmk}

The remainder of this paper is organised as follows.
Section~\ref{sec:informal} provides an informal and nontechnical
description of numerous examples covered by our theory. This serves as an illustration of the differences between the various Skorohod topologies on $D$, as well as the improvements
arising from the theory in this paper.
In Section~\ref{sec:rv}, we recall background material on regular variation in $\R^d$.
In Section~\ref{sec:F'}, we define the topological space $F'$ of decorated c\`adl\`ag functions~\cite{Freitas2Toddsub}.
Our main result, Theorem~\ref{thm:main}, on weak convergence in $F'$ is stated and proved in Section~\ref{sec:main}.
In Section~\ref{sec:ex}, we revisit various examples covered by Theorem~\ref{thm:main}.

\paragraph{Notation}
We use ``big O'' and $\ll$ notation interchangeably, writing $a_n=O(b_n)$ or $a_n\ll b_n$
if there are constants $C>0$, $n_0\ge1$ such that
$a_n\le Cb_n$ for all $n\ge n_0$.
As usual, $a_n=o(b_n)$ means that $a_n/b_n\to0$ and
and $a_n\sim b_n$ means that $a_n/b_n\to1$.

For c\`adl\`ag functions $u:[0,1]\to\R^d$ and $\tau\in (0,1]$, we let
$u(\tau^-)=\lim_{\epsilon\downarrow 0} u(t-\epsilon)$ denote the left hand limit of $u$ at $\tau$.

We denote by $p_k:\R^d\to\R$ the projection onto the $k$-th coordinate for $k=1, \ldots, d$.
Let $x,y\in\R$.
Throughout this paper, $x\wedge y=\min\{x, y\}$, $x\vee y=\max\{x, y\}$
and $[x,y]$ is the line segment
$[x\wedge y,x\vee y]$. 
For $x,y\in\R^d$, we define the product segment
$[[x,y]]=[p_1x,p_1y]\times\dots\times[p_dx,p_dy]$.

\section{Informal description and illustrative examples}
\label{sec:informal}

Throughout this section,
$T:M\to M$ is a measure-preserving dynamical system defined on a probability space $(M,\mu)$ and $v:M\to\R^d$ is a measurable observable.
For specified $\alpha\in(0,2)$, we define the sequence of c\`adl\`ag processes $W_n\in D([0,1],\R^d)$,
\[
W_n(t)=n^{-1/\alpha}\sum_{j=0}^{[nt]}v\circ T^j,
\]
on $M$.

\subsection{One dimensional maps}

One-dimensional maps $T:M\to M$, $M=[0,1]$, already provide a wide variety of different weak convergence properties. 
We begin with three examples, each of which exhibits convergence in the Skorohod $\cM_1$ topology, but for which much further information is gained by considering convergence in the enriched space $F'$.

\begin{example} \label{ex:1}
Tyran-Kami\'nska~\cite{TyranKaminska10} initiated the study of weak convergence to $\alpha$-stable L\'evy processes in deterministic dynamical systems, focusing on the standard Skorohod $\cJ_1$ topology on $D$. A specific example studied in~\cite{TyranKaminska10} is the Gauss map 
$T_1x=1/x \bmod1$
which arises in the study of continued fractions.
For the scalar observable $v(x)=[1/x]$, it was shown taking $\alpha=1$
that 
$W_n$ converges weakly (after appropriate centring) in the $\cJ_1$ topology to a totally-skewed (one-sided) $1$-stable L\'evy process.
\end{example}

\begin{example} \label{ex:2}
Our second example is a class of Pomeau-Manneville intermittent maps~\cite{PomeauManneville80} studied in~\cite{LSV99}
\[
T_2x=\begin{cases} x(1+2^{1/\alpha}x^{1/\alpha}) & 0\le x<\frac12 \\
2x-1 & \frac12 \le x\le 1
\end{cases} \;.
\]
These maps possess a neutral fixed point at $x=0$ (with $T_2'(0)=1$) that becomes stickier as $\alpha$ decreases resulting in anomalous behaviour.
For $\alpha>1$, there is a unique absolutely continuous invariant probability measure $\mu$.
Let $v$ be a scalar H\"older observable with $\int v\,d\mu=0$ and $v(0)\neq0$.
For $\alpha\ge2$, we have central limit theorem behaviour with normalisation $n^{1/2}$ for $\alpha>2$ and normalisation $(n\log n)^{1/2}$ for $\alpha=2$.
In the case of interest here, $\alpha\in(1,2)$, Gou\"ezel~\cite{Gouezel04} proved that $W_n(1)$ converges in distribution to a totally-skewed $\alpha$-stable law.
Convergence to the corresponding $\alpha$-stable L\'evy process was proved in the Skorohod $\cM_1$ topology in~\cite{MZ15}. It had already been noted in~\cite{TyranKaminska10} that convergence was impossible in the $\cJ_1$ topology.
\end{example}

In the first example, large jumps for $W_n(t)$ arise at separated times $t=j/n$ whenever $T_1^jx$ is near zero. In the second example, increments of $W_n$ are small (bounded by $n^{-1/\alpha}|v|_\infty$). However, when $T_2^jx$ is near the neutral fixed point at $0$, there are several successive values of $j$ for which the increments are all close to $n^{-1/\alpha}v(0)$ and these accumulate into a large jump. In $\cJ_1$, large jumps in the limit have to be approximated by large jumps of almost the same size at almost the same instant of time, so the $\cJ_1$ topology is appropriate for the first example but not the second. 

\begin{example} \label{ex:3}
Our third example, considered in~\cite{Gouezel_doub}, is provided by the doubling map $T_3=2x\bmod1$ with scalar observable
$v(x)=x^{-1/\alpha}$ where $\alpha\in(0,1)$. 
Again, $W_n(1)$ converges in distribution to a totally-skewed $\alpha$-stable law and  $W_n$ converges to the corresponding L\'evy process in the $\cM_1$ topology but not in $\cJ_1$.
However, the situation is quite different from that for $T_2$ where the increments for $W_n$ during an excursion near the neutral fixed point at $0$ limit on a vertical line segment. For $T_3$, the increments near the hyperbolic fixed point at $0$ limit on a sequence of a large jumps, decreasing geometrically in size at rate $2^{-1/\alpha}$ 
(see Example~\ref{ex:3+} for details).
\end{example}

For the three examples above,
the limiting excursions are (1) a pair of points, (2) a line segment, (3) a geometric sequence of points. These are illustrated in Figure~\ref{fig:M1}.
Note that all three examples converge in $\cM_1$. The first example is distinguished by converging also in $\cJ_1$. However, the second and third examples cannot be distinguished at the level of Skorohod topologies,  nor by the spaces in~\cite{Whitt} as discussed in~\cite[Section 2.3]{Freitas2Toddsub}. This is the issue addressed by the space $F'$ in~\cite{Freitas2Toddsub}. 

We obtain decorated processes in $F'$ by associating
to each jump, a \emph{profile}, namely a c\`adl\`ag function $P:[0,1]\to\R$ with $P(0)=0$ and $|P(1)|=1$, as shown in Figure~\ref{fig:M1}. 
Our main result, Theorem~\ref{thm:main}, gives convergence in $F'$ of $W_n$ to the decorated L\'evy process.

\begin{figure}[h]
\begin{tikzpicture}[thick, scale=0.5]

%column 1

\draw[ -] (1,0) -- (9,0);
\draw[-] (1, 0)--(1, -0.3);
\draw (1, -0.3) node[below] {\small $0$};
\draw[-] (9, 0)--(9, -0.3);
\draw (9, -0.3) node[below] {\small $1$};

\draw[ -o ] (1, 1) -- (5,1);

\fill (5,7) circle (5.5pt);
\draw[ -] (5, 7) -- (9,7);

\draw (3, 4) node[above] {\small $P_{I(\tau)}$};

\draw[ - >] (1,10) -- (9,10);
%\draw (8, 10) node[below] { \small $t$};
\draw[-] (5, 10)--(5, 9.7);
\draw (5, 9.7) node[below] {\small $\tau$};

\draw[ dashed] (1, 11) -- (3.5,11);
\draw[-](3.5, 11)--(5, 11);
%\draw[-](5, 11)--(5, 17);
\draw[-](5, 17)--(6.5, 17);
\draw[ dashed] (6.5, 17) -- (9,17);

%column 2

\draw[ -] (11,0) -- (19,0);
\draw[-] (11, 0)--(11, -0.3);
\draw (11, -0.3) node[below] {\small $0$};
\draw[-] (19, 0)--(19, -0.3);
\draw (19, -0.3) node[below] {\small $1$};

\draw (13, 4) node[above] {\small $P_{I(\tau)}$};

\draw[ - ] (11, 1) -- (19,7);

\draw[ - >] (11,10) -- (19,10);
%\draw (18, 10) node[below] { \small $t$};
\draw[-] (15, 10)--(15, 9.7);
\draw (15, 9.7) node[below] {\small $\tau$};

\draw[ dashed] (11, 11) -- (13.5,11);
\draw[-](13.5, 11)--(15, 11);
\draw[-](15, 11)--(15, 17);
\draw[-](15, 17)--(16.5, 17);
\draw[ dashed] (16.5, 17) -- (19,17);

%column 3

\draw[ -] (21,0) -- (29,0);
\draw[-] (21, 0)--(21, -0.3);
\draw (21, -0.3) node[below] {\small $0$};
\draw[-] (29, 0)--(29, -0.3);
\draw (29, -0.3) node[below] {\small $1$};

\draw (26, 3) node[above] {\small $P_{I(\tau)}$};

\draw[ -o ] (21, 1) -- (22.5,1);

\draw[ -o ] (22.5, 4) -- (24.3,4);
\fill (22.5,4) circle (5.5pt);

\draw[ -o ] (24.3, 5.5) -- (25.8,5.5);
\fill (24.3,5.5) circle (5.5pt);

\draw[ -o ]  (25.8,6.2)--(27, 6.2);
\fill (25.8,6.2) circle (5.5pt);

\draw[ -o ]  (27, 6.6)--(27.8, 6.6);
\fill (27,6.6) circle (5.5pt);

\draw[ -o ]  (27.8, 6.8)--(28.3, 6.8);
\fill (27.8,6.8) circle (5.5pt);

\draw[ - ]  (28.3, 7)--(29, 7);

\fill (5,7) circle (5.5pt);
\draw[ -] (5, 7) -- (9,7);

\draw[ - >] (21,10) -- (29,10);
%\draw (28, 10) node[below] { \small $t$};
\draw[-] (25, 10)--(25, 9.7);
\draw (25, 9.7) node[below] {\small $\tau$};

\draw[ dashed] (21, 11) -- (23.5,11);
\draw[-](23.5, 11)--(25, 11);

\fill (25,14) circle (5.5pt);
\fill (25,15.5) circle (5.5pt);
\fill (25,16.2) circle (5.5pt);
\fill (25,16.6) circle (5.5pt);
\fill (25,16.8) circle (5.5pt);

\draw[-](25, 17)--(26.5, 17);
\draw[ dashed] (26.5, 17) -- (29,17);

\end{tikzpicture}

\caption{Limiting excursions (first row) at some time $\tau$ and profiles (second row) for the three one-dimensional examples $T_1$ (Gauss map), $T_2$ (intermittent map with $v(0)>0$), $T_3$ (doubling map). Each excursion/profile corresponds to one jump of the limiting L\'evy process. Each excursion is a subset of a vertical line and is the image of the corresponding profile $P= P_{I(\tau)}:[0,1]\to\R$ suitably scaled ($I(\tau)$ picks the correct profile at $\tau$ and is defined in Section~\ref{sec:main}).}
\label{fig:M1}

\end{figure}
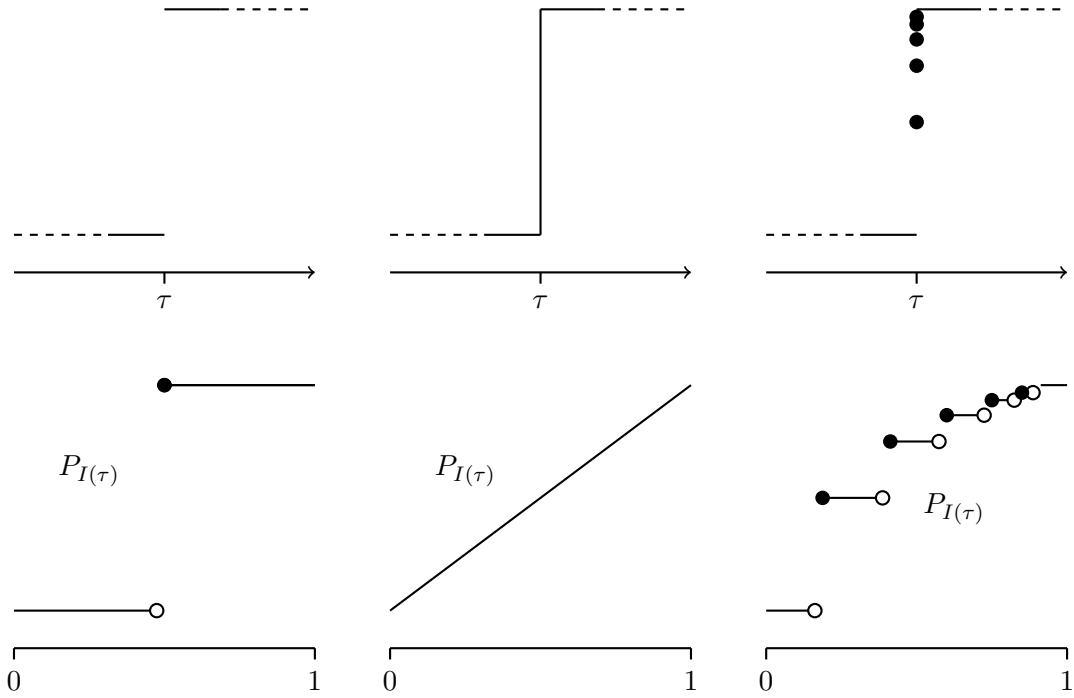

The limiting L\'evy process in these three examples are totally-skewed with jumps that all have the same sign (positive for $T_1$ and $T_3$ and $\sgn v(0)$ for $T_2$).  
The next example includes two-sided L\'evy processes as well as the vector-valued case.

\begin{example} \label{ex:4}
Continuing Example~\ref{ex:2}, let
$T_4:M\to M$, $M=[0,1]$ be an intermittent map with finitely many neutral fixed points $x_1,\dots,x_k\in M$ where 
$T_4x\approx x+c_j (x-x_j)^{1+1/\alpha_j}$ for $x\approx x_j$ ($c_j>0$),
where $\alpha_1=\min\alpha_j\in(1,2)$.
For an explicit example, see~\cite[eq.~(1.5) and Lemma~6.3]{CFKM20}.
Let $v:M\to\R$ be H\"older with $v(x_1)\neq0$.
Then we obtain 
convergence in $\cM_1$ to an $\alpha_1$-stable L\'evy process.
If $\alpha_1=\alpha_2$ and $v(x_1)v(x_2)\neq0$, then the L\'evy process is two-sided.

Now define profiles $P_\pm(t)\equiv \pm t$. We attach $P_+$ to each positive jump and $P_-$ to each negative jump, scaled by the size of the jump.
Our results guarantee convergence in $F'$ to the L\'evy process enriched in this manner.
Note that it is not required that $v(x_j)\neq0$ for all $j$; it suffices that $v(x_j)\neq0$ for at least one neutral fixed point $x_j$ with $\alpha_j$ least.

We can also consider vector-valued observables $v:M\to\R^d$ 
with $d\ge2$ as in~\cite{CFKM20}.
Let $i\in\cI$ be the set of indices $i\in\{1,\dots,k\}$ such that
$\alpha_i=\alpha_1$ least and $v(x_i)\neq0$. 
We assume that $\cI\neq\emptyset$.
By~\cite{CFKM20}, we obtain convergence in $\cM_1$ to an $\alpha_1$-stable L\'evy process with jumps in the directions
$\omega_i=v(x_i)/|v(x_i)|$, $i\in\cI$. We attach the profile $P_i(t)\equiv t\omega_i$ to jumps in directions $\omega_i$, scaled by the size of the jump.
As shown in Example~\ref{ex:4+}, the results in this paper yield convergence in $F'$ to this enriched L\'evy process.
\end{example}

The $\cM_1$ topology suffices for the examples mentioned so far. Moreover, the profiles can be recovered from the excursion combined with the knowledge that convergence holds in $\cM_1$. In our next example, convergence fails in all Skorohod topologies and the profile contains information that cannot be gleaned from the excursion.

\begin{example} \label{ex:5}
Consider the map $T_5:M\to M$ studied in~\cite[Example~2.7]{Freitas2Toddsub}, 
\[
T_5x=3x\bmod1, \qquad v(x)=|x-\tfrac18|^{-2}-|x-\tfrac38|^{-2}.
\]
Large values of $v$ with alternating sign arise when $T_5^jx$ is close to the repelling period two orbit $\{\frac18,\frac38\}$.

In this example, $W_n(1)$ converges to a symmetric $\frac12$-stable law, and the normalised excursions consist of the points
$\{1-(-\frac19)^j;\,j=0,1,2,\ldots\}\cup\{1\}$ at positive jumps and
$\{-1+(-\frac19)^j;\,j=0,1,2,\ldots\}\cup\{-1\}$ at negative jumps.
Note that the excursions span $[0,\pm\frac{10}{9}]$, thereby overshooting the span $[0,\pm1]$ of the jumps. 
The excursion and profile for positive jumps are shown in Figure~\ref{fig:ex5}.
The profile contains considerable extra information, indicating that the size of the steps during one jump decrease in size with oscillating sign; the limiting excursion records the size of the steps but not the order in which they occur.
See Example~\ref{ex:5+} for further details.

Let $L_\alpha$ denote the corresponding $\frac12$-stable L\'evy process.
The functional 
\[
\psi:D\to D, \qquad
\psi(u)(t)=\sup_{s\in[0,t]}u(s),
\]
 is continuous in the Skorohod topologies.
In examples like the current one, with overshooting excursions, it is clear that the limit of $\psi(W_n)$ is unrelated to $\psi(L_\alpha)$ since $L_\alpha$ does not see the overshoots. Hence it follows from 
 the continuous mapping theorem that $W_n$ does not converge weakly to $L_\alpha$ in any Skorohod topology on $D$.
However, the enriched process records the overshoots and we recover
continuity of such functionals from $F'$ to $D$.
\end{example}

\begin{figure}[h]
\centering
\begin{tikzpicture}[thick, scale=0.7]

%excursion

\draw[ - >] (1,0) -- (9,0);
\draw[-] (5, 0)--(5, -0.3);
\draw (5, -0.3) node[below] {\small $\tau$};

\draw[ dashed] (1, 0.6) -- (3.5,0.6);
\draw[-](3.5, 0.6)--(5, 0.6);
%\draw[-](5, 11)--(5, 17);
\draw[-](5, 5.6)--(6.5, 5.6);
\draw[ dashed] (6.5, 5.6) -- (9,5.6);

\fill (5,7) circle (4pt);
\fill (5,4.6) circle (4pt);
\fill (5,6.1) circle (4pt);
\fill (5,5.3) circle (4pt);
\fill (5,5.8) circle (4pt);

%profile

\draw[ -] (11,0) -- (19,0);
\draw[-] (11, 0)--(11, -0.3);
\draw (11, -0.3) node[below] {\small $0$};
\draw[-] (19, 0)--(19, -0.3);
\draw (19, -0.3) node[below] {\small $1$};

\draw (16, 2) node[above] {\small $P_{I(\tau)}=P_1$};

\draw[ -o ] (11, 0.6) -- (12.5,0.6);

\draw[ -o ] (12.5, 7) -- (14.3,7);
\fill (12.5,7) circle (4pt);

\draw[ -o ] (14.3, 4.6) -- (15.8,4.6);
\fill (14.3,4.6) circle (4pt);

\draw[ -o ]  (15.8,6.1)--(17, 6.1);
\fill (15.8,6.1) circle (4pt);

\draw[ -o ]  (17, 5.3)--(17.8, 5.3);
\fill (17,5.3) circle (4pt);

\draw[ -o ]  (17.8, 5.8)--(18.3, 5.8);
\fill (17.8,5.8) circle (4pt);

\draw[ - ]  (18.3, 5.6)--(19, 5.6);

\end{tikzpicture}

\caption{Limiting excursion (left) and profile (right) at a positive jump for Example~\ref{ex:5}. The profile $P_1:[0,1]\to\R$ with $P_1(0)=0$, $P_1(1)=1$ corresponds to jumps initiated at $x=\frac18$ (see Example~\ref{ex:5+}); the other possibility being negative jumps initiated at  $x=\frac38$ (with profile $P_{-1}=-P_1$).
The second horizontal line in the profile is at height $\frac{10}{9}$, overshooting the range $[0, 1]$ of the profile.}
\label{fig:ex5}
\end{figure}
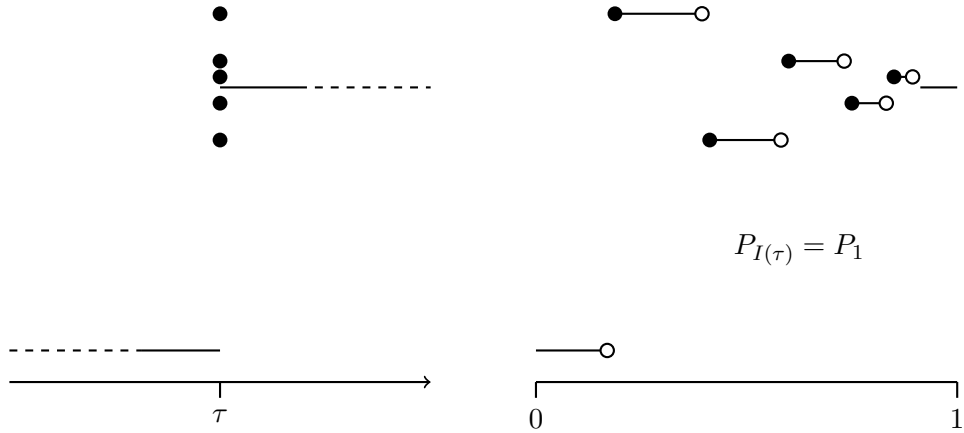

\begin{rmk}
In situations where the results in this paper apply, we obtain necessary and sufficient conditions for convergence in the $\cM_1$ and $\cM_2$ topologies by arguments
in~\cite{MV20}
for $\cM_1$ and in~\cite{JMPVZ21} for $\cM_2$.
Convergence holds in $\cM_2$ if and only if 
$P(t)$ is contained in the line segment $\ell_P$  joining $0$ to $P(1)$ for all $t\in(0,1)$ and each profile $P$. 
Convergence holds in $\cM_1$ if and only if 
in addition $t\mapsto P(t),\ t\in[0,1]$ is monotone in $\ell_P$ for each $P$.

If there exists a profile $P$ and a $t\in[0,1]$ such that $P(t)\not\in \ell_P$ then convergence fails in all the Skorohod topologies. 
This occurs naturally (and typically for $d\ge2$) in the billiard examples in Subsection~\ref{sec:cusps}, as well as in Example~\ref{ex:5}.
\end{rmk}

\subsection{Billiards with flat cusps}
\label{sec:cusps}

Planar dispersing billiards~\cite{ChernovMarkarian} were introduced by Sinai~\cite{Sinai70} and are based on deterministic Lorentz gas models~\cite{Lorentz05}. They 
are known to satisfy numerous classical statistical limit laws 
such as the CLT  and WIP~\cite{BunimSinai81,BunimovichSinaiChernov91}. Billiards with cusps were treated by~\cite{BalintChernovDolgopyat11} who obtained convergence to a normal distribution/Brownian motion but with anomalous diffusive rate $(n\log n)^{1/2}$ instead of the usual $n^{1/2}$ normalisation.

Jung \& Zhang~\cite{JungZhang18} proved convergence to an $\alpha$-stable law for planar dispersing billiards with a flat cusp.
The billiard table $Q\subset\R^2$ has a boundary consisting of at least three $C^3$ curves
with a cusp formed by two of these curves $\Gamma_\pm$. In
coordinates $(s, z) \in\R^2$, the cusp lies at $(0, 0)$ and $\Gamma_\pm$ are tangent to the $s$-axis
at $(0, 0)$. Moreover, 
$\Gamma_\pm = \{(s, \pm\beta^{-1} s^\beta )\}$
close to $(0, 0)$, 
where $\beta > 2$. 

The phase space of the billiard map (collision map) $T$ is given by $M= \partial  Q \times[0, \pi ]$,
with coordinates $(r, \theta )$ where $r$ denotes arc length along $\partial  Q$ and $\theta$ is the angle between
the tangent line of the boundary and the collision vector in the clockwise direction. There
is a natural ergodic invariant probability measure $d\mu = (2|\partial Q|)^{-1} \sin \theta \,dr \,d\theta$ on $M$,
where $|\partial Q|$ is the length of $\partial Q$.

Let $v:M\to\R$ be a H\"older observable with $\int_M v\,d\mu=0$.
By~\cite{JungZhang18}, $W_n(1)$ converges weakly to a totally-skewed $\alpha$-stable law with $\alpha=\beta/(\beta-1)\in(1,2)$.
The case of multiple cusps was considered in~\cite{JungPeneZhang20}.
We refer to~\cite{JungPeneZhang20} for precise details of the configuration space; in particular it is assumed that no trajectory runs directly between  the vertices of two cusps.

Convergence to the corresponding L\'evy process is considered in~\cite{MV20,JungPeneZhang20,JMPVZ21}. 
Again, $\cJ_1$ convergence is impossible since the jumps are bounded.
If $v$ has constant sign on each cusp, then convergence holds in the $\cM_1$ topology. 
However, a much wider range of convergence properties is possible due to the fact that the cusp (which is a single point $(0,0)$ in configuration space) is a union of two line segments
\[
\{(r_+,\theta):0\le\theta\le\pi\}
\cup\{(r_-,\theta):0\le\theta\le\pi\}
\]
in phase space. Here, $r_\pm\in\Gamma_\pm$ denotes the arc length coordinates of $(0,0)$.

At each of the flattest cusp (those with largest $\beta$), we associate a continuous profile 
\[
P:[0,1]\to\R^d, \qquad P(t)=\frac12\int_0^t\{v(r_+,\theta)+v(r_-,\pi-\theta)\}(\sin\theta)^{1/\alpha}\,d\theta,
\]
as depicted in Figure~\ref{fig:cusps}.
We require that $P(1)\neq0$ for each $P$ and normalise so that $|P(1)|=1$.
In general, $P$ may have overshoots for $d=1$, see Figure~\ref{fig:cusps}(c), and overshoots are typical for $d\ge2$. Hence the billiard example provides many instances where convergence fails in all Skorohod topologies. 
In Section~\ref{sec:Young}, we apply our results to show (currently under the assumption that $P(1)$ is distinct for distinct flattest cusps) that convergence holds in $F'$ to an enriched L\'evy process.

\begin{figure}[htb]
\centering
\includegraphics[scale=.7]{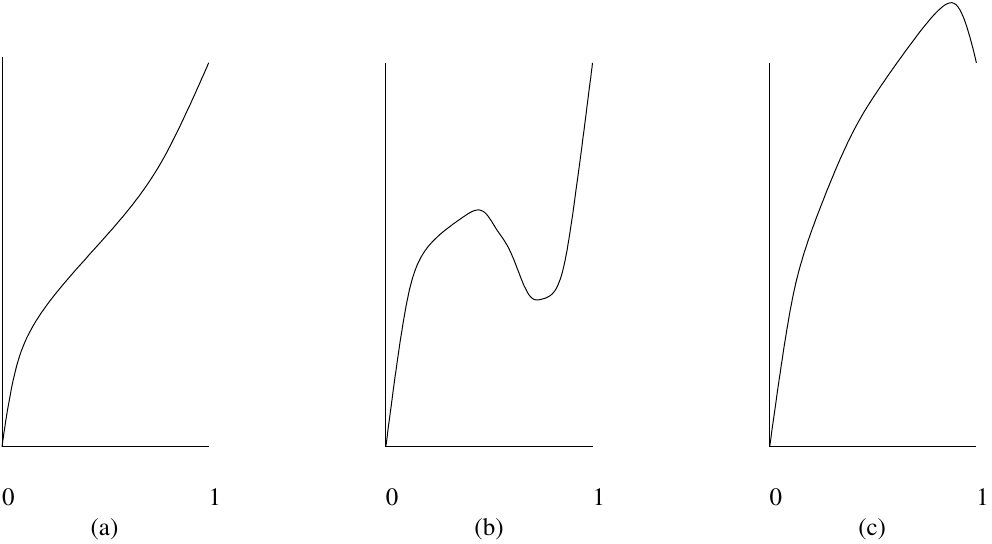}
\caption{Different possible shapes of the profile at a cusp for
billiards with flat cusps for a scalar observable $v:M\to\R$.
(a)  Convergence holds in the $\cM_1$ topology;
(b) Convergence holds in the $\cM_2$ topology but not in the $\cM_1$ topology;
(c) Convergence fails in all Skorohod topologies but holds in the enriched space $F'$.}
\label{fig:cusps}
\end{figure}

\section{Regular variation in $\R^d$ and spectral measures}
\label{sec:rv}

In this section, we recall some basic material on regularly varying vector-valued functions and the notion of spectral measure~\cite[Section~2.3]{SamorodnitskyTaqqu}.

Let $\bbS^{d-1} = \{x \in \R^d : |x| = 1 \}$ denote the unit sphere in $\R^d$.
(Throughout, $|\cdot|$ denotes the Euclidean norm.)

\begin{defn} \label{defn:rv}
An $\R^d$-valued random variable $Z$ is
    \emph{regularly varying} with order $\alpha\in(0,2)$ if
    there exists a Borel probability measure $\nu$
    on
    $\bbS^{d-1}$, called the \emph{spectral measure}, such that
\[
        \lim_{t \to \infty}
        \frac{\PP(|Z| > \lambda t, \ Z / |Z| \in E)}{\PP(|Z| > t)}
        = \lambda^{-\alpha} \nu(E)
\]
    for all $\lambda > 0$ and all Borel sets $E \subset \bbS^{d-1}$ with $\nu(\partial E) = 0$.
\end{defn}

Taking $E=\bbS^{d-1}$, we have that $|Z|$ is a scalar regularly varying
function.
Hence there exists a slowly varying function $\mathcal{l}:[0,\infty)\to(0,\infty)$
such that 
\[
\PP(|Z|>t)=t^{-\alpha}\mathcal{l}(t).
\]

Suppose that $Z$ is regularly varying as in Definition~\ref{defn:rv} and that
either $\alpha\in(0,1)$ or $\alpha\in(1,2)$ and
$\E Z=0$. 
Let $Z_1,Z_2,\ldots$ be a sequence of i.i.d.\ random variables distributed as $Z$. Choose $b_n\sim (n\mathcal{l}(b_n))^{1/\alpha}$. Then
\[
b_n^{-1}\sum_{j=0}^{n-1}Z_j   \to_w G_\alpha,
\]
where 
$G_\alpha$ is a $d$-dimensional $\alpha$-stable law with 
characteristic function
\[
\E\, e^{is\cdot G_\alpha}= \exp\Big\{-\int_{\bbS^{d-1}}|s\cdot x|^\alpha\Big(1-i\sgn(s\cdot x)
\tan\frac{\pi\alpha}{2}\Big)\cos\frac{\pi\alpha}{2}\,\Gamma(1-\alpha)\,d\nu(x)\Big\}
\]
for $s\in\R^d$. The random variable $Z$ is said to be in the domain of attraction of $G_\alpha$.

Now let $\tL_\alpha\in D([0,1],\R^d)$ denote the $d$-dimensional $\alpha$-stable L\'evy process corresponding to the stable law $G_\alpha$.
Also, define the process $W_n^Z$ by
\[
W^Z_n(t)=b_n^{-1}\sum_{j=0}^{[nt]-1}Z_j .
\]
Then $W^Z_n\to_w \tL_\alpha$ in the Skorohod $\cJ_1$ topology.

\begin{rmk} The strong $\cJ_1$ topology on $D([0,1],\R^d)$ is metrised 
by 
\[
d_{\cJ_1}(u_1,u_2)=\inf_\lambda \Big(\sup_{t\in[0,1]}|u_1(\lambda(t))-u_2(t)|+\sup_{t\in[0,1]}|\lambda(t)-t|\Big),
\]
where the infimum is over the set of continuous strictly increasing bijections $\lambda:[0,1]\to [0,1]$.
There is also a weak $\cJ_1$ topology defined by working coordinatewise (which allows $d$ different parametrisations $\lambda$). The weak and strong topologies coincide for $d=1$ and are different for $d\ge2$.
Throughout this paper, by $\cJ_1$ we mean strong $\cJ_1$.

\end{rmk}

\section{Decorated c\`adl\`ag space $F'$}
\label{sec:F'}

In this section, we recall the definition of the topological space 
$F'=F'([0,1],\R^d)$ introduced in~\cite{Freitas2Toddsub}. We make liberal use of the notation introduced at the end of the Introduction.

Let $D=D([0, 1], \R^d)$ be the space of c\`adl\`ag functions defined on 
$[0,1]$.  For $u\in D$, we denote by $\disc_u\subset(0,1)$ the set of 
discontinuities of $u$. 

The decorated c\`adl\`ag space $F'$ is defined to be the space of excursion triples $\big(u, \cS, \{e^\tau\}_{\tau\in \cS}\big)$
where 
\begin{itemize}
\item $u\in D$, 
\item $\cS$ is an at most countable subset of $(0,1)$ containing $\disc_u$,
\item $e^\tau\in D$ satisfies
$e^\tau(0)=u(\tau^-)$ and $e^\tau(1)=u(\tau)$ for each
$\tau\in \cS$. 
\item 
For all $\epsilon >0$, there exist only finitely many $\tau\in \cS$ such that $\diam\range e^\tau>\epsilon$.
\end{itemize}
(The second and fourth conditions are automatic for $u\in D$ if $\cS=\disc_u$
and $\range e^\tau\subset [[u(\tau^-),u(\tau)]]$.)

The remainder of this section is devoted to defining the appropriate topology on~$F'$.
To do this, it is useful to consider two further spaces $E$ and $\tD$.

The space $E=E([0, 1], \R^d)$ introduced by Whitt~\cite[Sections~15.4 and 15.5]{Whitt} is the space of triples
$
\big(u,\cS,\{K^\tau\}_{\tau\in \cS}\big)
$
where 
\begin{itemize}
\item $u\in D$, 
\item $\cS$ is an at most countable subset of $(0,1)$ containing $\disc_u$, 
\item 
$K^\tau$ is a compact connected subset of $\R^d$ containing at least $u(\tau^-)$ and $u(\tau)$
for each $\tau\in \cS$,  
\item 
 For all $\epsilon >0$, there exist only finitely many $\tau\in \cS$ such that $\diam K^\tau>\epsilon$. 
\end{itemize}

We may identify each element $\big(u,\cS,\{K^\tau\}_{\tau\in\cS}\big)\in E$ with the set-valued function
\[
\hat u(t)=\begin{cases}
    K^t& \text{if } t\in \cS\\
    \{u(t)\}              & t\in[0,1]\setminus\cS
\end{cases}, 
\]
and its graph $\Gamma_{\hat u}=\{(t,z)\in [0,1]\times\R^d : z\in \hat u(t)\}$.  

For elements of $E$, the associated graph $\Gamma_{\hat u}$ is a compact set. 
Recall that for compact sets $A,B\subset \R^n$, the Hausdorff distance between $A$ and $B$ is given as
$$
H(A,B)=
\sup_{x\in A}\inf_{y\in B}|x-y| \vee \sup_{y\in B}\inf_{x\in A}|x-y|.
$$
We endow $E$ with the Hausdorff metric by setting
\[
d_E(\hat u_1, \hat u_2)=H(\Gamma_{\hat u_1},\Gamma_{\hat u_2}),
\quad \hat u_1,\hat u_2 \in E.
\]

Next, we introduce $\tD= \tD([0, 1], \R^d)= D/{\sim}$ where $u_1\sim u_2$ if there exists a reparametrisation $\lambda:[0, 1]\to [0, 1]$, i.e.\ a continuous strictly increasing bijection, such that $u_1\circ \lambda=u_2$.  Denote the equivalence class of $u$ by $[u]$.  We define
\[
d_{\tD}([u_1], [u_2]) = \inf_{\lambda}\sup_{t\in[0,1]}|u_1(\lambda(t))- u_2(t)|,
 \quad u_1,u_2 \in D,
\]
where the infimum is over the set of continuous strictly increasing bijections $\lambda:[0,1]\to [0,1]$ (this could be thought of as the induced metric from the $J_1$ metric on $\tD$).
We abuse notation within $\tD$ by writing $u$ to refer to both a representative of its equivalence class $[u]$ and the equivalence class itself. 
(Note that two elements of $\tD$ can be close even if $\lambda$ is far from the identity, so $\tD$ is quite different from $D$ with the $\cJ_1$ topology.)

We define projections 
\[
\pi_E:F'\to E, \qquad \pi_\tD:F'\to \tD
\]
as follows.

The projection $\pi_E$ is given by
\[
\pi_E\big(u, \cS, \{e^\tau\}_{\tau\in \cS})=\big(u, \cS, \{K^\tau\}_{\tau\in \cS}\big)
\]
 where
$$
K^\tau=\Big[\inf_{t\in [0,1]} p_1 e^{\tau}(t), \sup_{t\in [0,1]} p_1e^{\tau}(t)\Big]\times\cdots\times \Big[\inf_{t\in [0,1]} p_de^{\tau}(t), \sup_{t\in [0,1]} p_de^{\tau}(t)\Big].
$$

To define $\pi_\tD\big(u, \cS, \{e^\tau\}_{\tau\in \cS}\big)$, 
write $\cS=\{\tau_m:m\in\kappa\}$ where $\kappa\subset\{1,2,\dots\}$ is an at most countable (possibly empty) indexing set.
Define $s=\sum_{m\in\kappa}m^{-2}$.
Insert an interval $I_m$ of length $m^{-2}$ after each $\tau_m$ to obtain an interval of length $1+s$. Define 
$\tilde u:[0,1+s]\to\R^d$ to coincide with $u$ on $[0,1+s]\setminus\bigcup_m I_m$\footnote{In formulas, let $\varphi:[0,1]\to [0,1+s]\setminus\bigcup_m I_m$ be the piecewise smooth bijection with $\varphi'=1$ on $[0,1]\setminus\cS$.
Then $\tilde u(t)=u(\varphi^{-1}(t))$ on $[0,1+s]\setminus\bigcup_m I_m$.}  and 
to coincide with the appropriate time-scaled version of $e^{\tau_m}$ on 
$I_m$. (So if $I_m=[a,a+m^{-2}]$, then $\tilde u(a+t)=e^{\tau_m}(m^2t)$ for $0\le t\le m^{-2}$.)
Define $\pi_\tD\big(u, \cS, \{e^\tau\}_{\tau\in \cS}\big)(t) = \tilde u(t(1+s))$.

	We can now define a pseudometric on $F'$ by setting
	\[
	d_{F'}(\check u_1,\check u_2)=d_E(\pi_E \check u_1, \pi_E \check u_2)+ 
	d_\tD(\pi_\tD \check u_1, \pi_\tD \check u_2),
\quad \check u_1,\check u_2\in F'. 
	\]

	\begin{rmk} The space $F'$ with the topology defined here is separable (but not complete),
	see~\cite[Proposition~A.3(a)]{Freitas2Toddsub}.
	\end{rmk}

\begin{rmk} \label{rmk:psi}
For $d=1$, consider the maximum process functional $\psi:F'\to D$, 
given by
\[
\psi(\check u)(t)=\sup_{s\in[0,t]}\max\{(\pi_E\check u)(s)\},
\quad \check u\in F',\,t\in[0,1].
\]
This is continuous, showing that we recover a suitable class of continuous functionals preserving weak convergence in $F'$.
See~\cite[eq.~(5.5) and Theorem~15.5.1]{Whitt} for the corresponding situation in the Whitt space $E$.
\end{rmk}

	\section{Main theorem}
	\label{sec:main}

	In this section, we state and prove the main theoretical result of the paper.

	Let $T:M\to M$ be an ergodic measure-preserving transformation on a probability space $(M,\mu)$ and let $X\subset M$ be a measurable subset with $\mu(X)>0$.
	Define the first return time
	\[
	R:X\to{\mathbb Z}^+, \qquad R(x)=\inf\{n\ge1:T^nx\in X\}
	\]
	and the first return map
	\[
	f=T^R:X\to X, \qquad fx=T^{R(x)}x.
	\]
	We assume throughout that $R\in L^1$.
	The normalised restriction $\mu_X$ of $\mu$ restricted to $X$ is
	an ergodic $f$-invariant probability measure on $X$.

	Fix finitely many unit vectors 
	$\omega_i\in\bbS^{d-1}$, $i\in\cI$, where $\cI$ is a finite indexing set.
	For notational convenience, suppose that $1\in\cI$.
	Also, we fix a finitely supported spectral measure 
	\[
	\nu=\sum_{i\in\cI}a_i\delta_{\omega_i}
	\]
	on $\bbS^{d-1}$, where
	$a_i>0$ and $\sum_{i\in\cI}a_i=1$.
Choose $b_n\sim(n\mathcal{l}(b_n))^{1/\alpha}$ as in Section~\ref{sec:rv}, and
	let $\tL_\alpha$ denote the $\alpha$-stable L\'evy process with spectral measure $\nu$.

	Let $v:M\to\R^d$ be a vector-valued observable, and set 
	$v_k=\sum_{j=0}^{k-1}v\circ T^j$. We define the induced observable
	\[
	V=v_R:X\to\R^d, \qquad V=\sum_{j=0}^{R-1}v\circ T^j.
	\]
	Also, define the processes $W_n$, $W^V_n\in D([0,1],\R^d)$ on $M$ and $X$ respectively,
	\begin{equation} \label{eq:Wn}
	W_n(t)=b_n^{-1}\sum_{j=0}^{[nt]-1}v\circ T^j, \qquad
	W^V_n(t)=b_n^{-1}\sum_{j=0}^{[nt]-1}V\circ f^j.
	\end{equation}
	Our main hypothesis is that $W_n^V\to_{\mu_X}\tL_\alpha$ in $D$ with the $\cJ_1$ topology.\footnote{We write $\to_{\mu_X}$ to denote weak convergence, emphasising the probability space on which $W_n^V$ are defined.}

	Define the $\alpha$-stable L\'evy process $L_\alpha=\big(\int_X R\,d\mu_X\big)^{-1/\alpha} \tL_\alpha$. Our aim is to prove weak convergence in $F'$ of $W_n$ to $L_\alpha$.  To make sense of this, we first need to embed $W_n$ and $L_\alpha$ as decorated processes $W_n^{F'}$ and $L_\alpha^{F'}$ in $F'$.

	\paragraph{Embedding of $W_n$ in $F'$.}
	We embed $W_n$ in $F'$ in a somewhat arbitrary (harmless) manner by attaching trivial excursions.
	Given $u\in D$, let $\Delta u(\tau)=u(\tau)-u(\tau^-)$.
	We define elements $W_n^{F'}\in F'$ on $(M,\mu)$,
	\[
	W_n^{F'}=\big(W_n,\cS_n,\{e_n^\tau\}_{\tau\in\cS_n}\big)
	\]
	where $\cS_n=\{\frac{j}{n},\,1\le j\le n-1\}$ 
	and $e_n^\tau:[0,1]\to\R^d$ is given by
	\[
	e_n^\tau(t)=W_n(\tfrac{j-1}{n})+1_{[\frac12,1]}(t)\Delta W_n(\tfrac{j}{n})
	\quad\text{for $\tau=\tfrac{j}{n}\in\cS_n$.}
	\]

	\paragraph{Embedding of $L_\alpha$ in $F'$.}

	Let $P_i$, $i\in\cI$, be a finite collection of 
	\emph{profiles}, namely c\`adl\`ag functions $P_i\in D([0,1],\R^d)$ 
	with $P_i(0)=0$ and $P_i(1)=\omega_i$.
	We now describe how to embed $L_\alpha$ in $F'$
	by adjoining the profiles $P_i$, suitably scaled, at each discontinuity of $L_\alpha$.

	Let $\cS=\disc_{L_\alpha}$.
	For each $\tau\in\cS$, we can express $\Delta L_\alpha(\tau)$ uniquely in the form
	$\Delta L_\alpha(\tau)=|\Delta L_\alpha(\tau)| \omega_{I(\tau)}$ with $I(\tau)\in\cI$.
	Define
	\[
	L_\alpha^{F'}=\big(L_\alpha,\cS,\{e^\tau\}_{\tau\in\cS}\big)
	\]
	where the excursion
	$e^\tau:[0,1]\to\R^d$, $\tau\in\cS$, is given by
	\[
	e^\tau(t)=L_\alpha(\tau^-)+|\Delta L_\alpha(\tau)|P_{I(\tau)}(t).
	\]

	\paragraph{Hypotheses.}

	As already mentioned, our main hypothesis is that $W_n^V\to_{\mu_X}\tL_\alpha$ in $D$ with the $\cJ_1$ topology.
	We require one more assumption linking the dynamics to the profiles.
Define $\Pi:\R^d\to D$ by setting
$\Pi(y)=P_i$ when $y/|y|$ is closest to $\omega_i$.
If $y/|y|$ is equidistant from two distinct $\omega_i$, or $y=0$, set $\Pi(y)=0$.
	We define functions $\xi,\,\zeta\in D([0,1],\R^d)$ on $X$,
	\[
	\xi(t)=v_{[tR]},\qquad
	\zeta(t)= |V|\Pi(V)(t)+t\big\{V-|V|\Pi(V)(1)\big\}.
	\]
	Our second main hypothesis is that
	\begin{equation} \label{eq:main}
	b_n^{-1}\max_{0\le j\le n} d_\tD(\xi,\zeta)\circ f^j \to_{\mu_X} 0.
	\end{equation}

	\begin{thm} \label{thm:main}
	Assume
	that $W_n^V\to_{\mu_X} \tL_\alpha$ in $D$ with the Skorohod $\cJ_1$ topology, and suppose that hypothesis~\eqref{eq:main} is satisfied.
	Then
	$W_n^{F'}\to_\mu L_\alpha^{F'}$ in $F'$.
	\end{thm}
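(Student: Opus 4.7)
The overall strategy splits the $F'$-convergence into two ingredients that match the two components of the pseudometric $d_{F'} = d_E\circ\pi_E + d_\tD\circ\pi_\tD$. The hypothesis $W_n^V\to_{\mu_X}\tL_\alpha$ in $\cJ_1$ controls when and how large the jumps occur, while hypothesis~\eqref{eq:main} controls the shape of the path during each jump. The first gives convergence at the level of $\pi_E$ (which only records the range of each excursion), and the first together with the second gives convergence at the level of $\pi_\tD$ (which records the full parametrised trajectory up to reparametrisation). The final step is to transfer from the induced system $(X,f,\mu_X)$ to the base system $(M,T,\mu)$ via the usual inducing argument, with time-change constant $\E_{\mu_X} R$, which produces the rescaling $L_\alpha = (\E_{\mu_X} R)^{-1/\alpha}\tL_\alpha$.

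Concretely, I would use separability of $F'$ to invoke the Skorohod representation theorem and realise $W_n^V\to\tL_\alpha$ almost surely in $\cJ_1$ on a common probability space, with reparametrisations $\lambda_n\to\mathrm{id}$ uniformly. Fix a large jump $\tau\in\disc_{\tL_\alpha}$ of size $\Delta\tL_\alpha(\tau) = |\Delta\tL_\alpha(\tau)|\omega_{I(\tau)}$. For each large $n$, this jump corresponds to a unique return cycle indexed by some $j_n$, with $V\circ f^{j_n}/b_n\to \Delta\tL_\alpha(\tau)$, so in particular $|V\circ f^{j_n}|/b_n\to|\Delta\tL_\alpha(\tau)|$ and $V\circ f^{j_n}/|V\circ f^{j_n}|\to \omega_{I(\tau)}$. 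Hypothesis \eqref{eq:main} then implies that the actual excursion $b_n^{-1}\xi\circ f^{j_n}$ of $W_n$ during this cycle, viewed in $\tD$, is close to $b_n^{-1}\zeta\circ f^{j_n}$, which is the scaled profile $b_n^{-1}|V\circ f^{j_n}|P_{I(\tau)}$ plus a linear drift whose magnitude is $b_n^{-1}|V\circ f^{j_n} - |V\circ f^{j_n}|\omega_{I(\tau)}|\to 0$. Hence this excursion converges in $d_\tD$ to $|\Delta\tL_\alpha(\tau)|P_{I(\tau)}$, which is the profile attached to $\tau$ in $L_\alpha^{F'}$. The uniform maximum in \eqref{eq:main} means this holds simultaneously for all jumps up to time $n$, so the enriched processes converge in $F'$. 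To pass from $X$ to $M$, observe that the base process $W_n(t)$ evaluated at the return time $\tau_j/n$ equals $W_n^V(j/n)$, and the ergodic theorem $\tau_j/j\to\E_{\mu_X} R$ yields the prefactor $(\E_{\mu_X} R)^{-1/\alpha}$ in the Lévy process; between returns, the base process traces out exactly the excursion shapes that hypothesis \eqref{eq:main} controls. Starting-point issues (since $\mu$ is supported on $M$ rather than $X$) are standard: for $\mu$-a.e.\ $x\in M$ one has $T^kx\in X$ for a finite $k$, and such a bounded shift is absorbed by all the relevant topologies.

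\textbf{Main obstacle.} The most delicate step is to reconcile the two different types of control provided by the hypotheses with the somewhat intricate definition of the $d_\tD$ piece of the $F'$-pseudometric, in which stretching intervals of length $m^{-2}$ are inserted at \emph{every} point of $\cS_n$, including the $n-1$ grid points carrying only trivial step-function excursions. One must enumerate the excursions in decreasing order of diameter so that the top-$k$ jumps (which converge to the top-$k$ jumps of $L_\alpha^{F'}$ by $\cJ_1$ convergence on $X$) receive the top-$k$ indices $m=1,\dots,k$ and hence the longest inserted intervals, while the many trivial excursions at grid points receive large $m$ and contribute a vanishing amount to the stretched time axis. A further subtle point, already flagged by the author in the statement, is that the linear-correction term $t(V-|V|\Pi(V)(1))$ in $\zeta$ must genuinely be negligible at the scale $b_n$ on \emph{every} return cycle (not merely in distribution), which is precisely why \eqref{eq:main} is formulated as a uniform bound over $0\le j\le n$ rather than a statement in distribution. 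Once these book-keeping issues are handled, the convergence $d_{F'}(W_n^{F'}, L_\alpha^{F'})\to 0$ almost surely on the Skorohod-coupling space follows, and implies the claimed weak convergence in $F'$.
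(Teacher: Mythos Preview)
Your overall plan is sound and close in spirit to the paper's, but the emphasis and the identified difficulty are both off.

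First, the ``main obstacle'' you flag is not an obstacle at all. The projection $\pi_\tD$ lands in $\tD = D/{\sim}$, so the choice of enumeration $\{\tau_m\}$ and the lengths $m^{-2}$ are irrelevant: any two enumerations produce the same equivalence class. In particular the trivial step-function excursions attached to $W_n^{F'}$ at the grid points $j/n$ are, up to reparametrisation, invisible (they merely insert constant stretches before and after a jump already present in $W_n$, and these are absorbed by $\sim$). No ordering by diameter is needed.

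Second, there is a genuine gap in your Skorohod-coupling step. After representing $W_n^V\to\tL_\alpha$ a.s.\ on a common space, you invoke $\xi\circ f^{j_n}$ and hypothesis~\eqref{eq:main}. But $W_n^V$ records only the block sums $V\circ f^j$, not the finer intra-block path $\ell\mapsto v_\ell\circ f^j$; the random variables $\xi\circ f^j$ and the event in~\eqref{eq:main} live on $(X,\mu_X)$ and are not carried over by a coupling of $W_n^V$ alone. To make your argument work you would have to couple the full joint law of $\big(W_n^V,\,W_n^{F'},\,\max_j d_\tD(\xi,\zeta)\circ f^j\big)$ from the start---possible, but not what you wrote, and awkward since no limit is specified for the last two components.

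The paper bypasses this by defining an a.e.-continuous functional $\chi:D\to F'$ that attaches to each jump of its argument the profile selected by $\Pi$. One time-changes $W_n^V$ via the lap number to get $U_n(t)=b_n^{-1}\sum_{k<N_{[nt]}}V\circ f^k$, so $U_n\to_{\mu_X}L_\alpha$ in $\cJ_1$ (this is your inducing step), and the continuous mapping theorem gives $\chi U_n\to_{\mu_X}\chi L_\alpha=L_\alpha^{F'}$ directly. One then shows, entirely on $(X,\mu_X)$, that $d_{F'}(W_n^{F'},\chi U_n)\to_{\mu_X}0$: on the completed return blocks $[0,t_{n,N_n}]$ one interposes an auxiliary element $\tU_n\in F'$ carrying the dynamical excursions $t\mapsto b_n^{-1}v_{[tR]}\circ f^j$, for which $\pi_\tD W_n^{F'}=\pi_\tD\tU_n$ in $\tD$ (identical concatenation of the same pieces), while hypothesis~\eqref{eq:main} gives $d_{F'}(\tU_n,\chi U_n)\to0$; the incomplete final block on $[t_{n,N_n},1]$ is handled separately. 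The passage from $\mu_X$ to $\mu$ uses Zweim\"uller's strong distributional convergence criterion, which amounts to checking $d_{F'}(W_n^{F'}\circ T,W_n^{F'})\to_\mu 0$---a precise version of your ``bounded shift'' remark.
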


	In the remainder of this section, we prove Theorem~\ref{thm:main}.

	\begin{rmk} \label{rmk:zeta}
A more natural choice when $\cI=\{1\}$ is to take $\zeta=VP_1$.
Our definition of $\zeta$ has the advantage that it treats all cases simultaneously.
For example,
        in the case $d=1$, $\cI=\{\pm1\}$, $\omega_\pm=\pm1$, we obtain 
        $\zeta=|V|P_{\sgn V}$.

        For $d=1$ with $\cI=\{1\}$ and
        $\omega_1=1$, we have
        $\zeta(t)=\begin{cases} VP_1(t) & V>0 \\ -VP_1(t)+2tV & V\le 0 \end{cases}$.
        The strange looking definition of $\zeta$ for $V\le0$ is unimportant since in practice $V$ will be large and positive in such situations.
	\end{rmk}

\begin{rmk}
We can adapt our proof to work when $R$ is a generalised inducing time, rather than necessarily a first return.  
This can be done by going to the corresponding Young tower and considering the first return to the base as in \cite[Section 4]{MV20}.
\end{rmk}

	\subsection{Initial elements of the proof}
	\label{sec:outline}

	The first step is to use
	ideas of strong distributional convergence~\cite{Zweimuller07} to reduce from 
	weak convergence w.r.t.\ $\mu$ to
	weak convergence w.r.t.\ $\mu_X$.

	\begin{lemma} \label{lem:SDC}
	$d_{F'}(W_n^{F'}\circ T,W_n^{F'})\to_\mu 0$.
	\end{lemma}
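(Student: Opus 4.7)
The plan is to show that $W_n^{F'}\circ T$ is a small perturbation of $W_n^{F'}$ in the pseudometric $d_{F'}$, exploiting a one-index shift that relates the jumps of $W_n$ under $T$.

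First I would observe the key identity: for $t\in[0,1-1/n]$ outside the finite exceptional set where $nt$ is an integer, $W_n(t)(Tx) = W_n(t+1/n)(x) - b_n^{-1}v(x)$. Correspondingly, at each $\tau_j=j/n\in\cS_n$ with $1\le j\le n-2$, the excursion satisfies $e_n^{\tau_j}(Tx)=e_n^{\tau_{j+1}}(x)-b_n^{-1}v(x)$, since the jump sizes $\Delta W_n(\tau_j)(Tx)=b_n^{-1}v(T^jx)=\Delta W_n(\tau_{j+1})(x)$ match under the shift and the pre-jump levels differ only by the constant $-b_n^{-1}v(x)$. The unmatched pieces are the excursion of $W_n^{F'}$ at $\tau_1$ (of diameter $b_n^{-1}|v(x)|$), the excursion of $W_n^{F'}\circ T$ at $\tau_{n-1}$ (of diameter $b_n^{-1}|v(T^{n-1}x)|$), and the endpoint mismatch $W_n(1)(Tx)-W_n(1)(x)=b_n^{-1}(v(T^nx)-v(x))$.

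Setting $\epsilon_n(x)=1/n+b_n^{-1}(|v(x)|+|v(T^{n-1}x)|+|v(T^nx)|)$, I would then bound each piece of $d_{F'}=d_E+d_\tD$ by a constant multiple of $\epsilon_n(x)$. For $d_E$, each point of $\Gamma_{\pi_E W_n^{F'}(Tx)}$ is matched to a point of $\Gamma_{\pi_E W_n^{F'}(x)}$ via the shift $(t,z)\mapsto(t+1/n,z+b_n^{-1}v(x))$, with the unmatched boundary jumps and the endpoint discrepancy absorbed into $\epsilon_n(x)$; the reverse-direction inequality is analogous. For $d_\tD$, I would use a common enumeration $\tau_m=m/n$ of $\cS_n$ for both processes and construct a piecewise-linear, strictly increasing bijection of the extended interval $[0,1+s_n]$, $s_n=\sum_{m=1}^{n-1}m^{-2}$, that maps the slot containing $\tau_j$ together with $I_j$ of $W_n^{F'}(Tx)$ affinely onto the slot containing $\tau_{j+1}$ and $I_{j+1}$ of $W_n^{F'}(x)$ for $j=1,\ldots,n-2$, with the leftover boundary length absorbed linearly. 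By the key identity, after this reparametrization the two functions on $[0,1+s_n]$ differ uniformly by at most $\epsilon_n(x)$.

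Finally, $\epsilon_n(x)\to 0$ in $\mu$-probability: the $1/n$ term is deterministic, while for any fixed $k$, $\mu\{b_n^{-1}|v\circ T^k|>\delta\}=\mu\{|v|>\delta b_n\}\to 0$ by $T$-invariance of $\mu$ and the $\mu$-a.e.\ finiteness of $v$. Hence $d_{F'}(W_n^{F'}\circ T,W_n^{F'})\to_\mu 0$. The main technical obstacle is the explicit construction of the $d_\tD$ reparametrization: because the inserted intervals $I_m$ have varying lengths $m^{-2}$, the one-slot shift is not an exact isometry of $[0,1+s_n]$, and one must verify that the affine distortions induced within each slot leave the sup-norm estimate essentially unchanged. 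Since the total length discrepancy is $O(1)$ spread across $n-1$ slots, each slot is distorted by a factor $1+O(1/n)$, which does not disturb the bound.
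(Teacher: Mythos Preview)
Your approach is correct and is essentially the same as the paper's: both exploit the shift identity $W_n\circ T(t)=W_n(t+\tfrac1n)-b_n^{-1}v$ on $(0,\tfrac{n-1}{n})$ to match the bulk of the two processes, then control the remaining boundary pieces by $b_n^{-1}(|v|+|v|\circ T^{n-1}+|v|\circ T^n)$, and conclude by $T$-invariance of~$\mu$. The paper's write-up is terser, simply asserting the $d_E$ and $d_{\tD}$ bounds without constructing the reparametrisation explicitly.

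Two small comments on your treatment of the $d_{\tD}$ part. First, your stated resolution of the ``obstacle'' is not quite right: the claim that each slot is distorted by a factor $1+O(1/n)$ fails for small $j$, since $|I_j|/|I_{j+1}|=((j+1)/j)^2$ equals $4$ at $j=1$. Fortunately this is irrelevant: $d_{\tD}$ places no constraint on how far the reparametrisation is from the identity, and on each matched slot both functions are piecewise constant with the same ordered sequence of values up to the additive constant $b_n^{-1}v(x)$, so \emph{any} strictly increasing continuous bijection matching the single interior jump already yields sup-norm difference $b_n^{-1}|v(x)|$ on that slot. Second, in your final paragraph the phrase ``for any fixed $k$'' is misleading since you apply the estimate with $k=n-1,n$; but the identity $\mu\{|v\circ T^k|>\delta b_n\}=\mu\{|v|>\delta b_n\}$ holds for all $k$ simultaneously by invariance, so the conclusion stands.
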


	\begin{proof}
	For $t\in(\frac{j}{n},\frac{j+1}{n})$, we have $W_n(t)=b_n^{-1}v_j$.
	Hence on this interval,
	\[
	W_n\circ T(t)=b_n^{-1}v_j\circ T=b_n^{-1}v_{j+1}-b_n^{-1}v
	=W_n(t+\tfrac1n)-b_n^{-1}v.
	\]
	This means that the values of 
	$W_n\circ T|_{(0,\frac{n-1}{n})}$ match up with those of 
	$W_n|_{(\frac{1}{n},1)}$ within error $b_n^{-1}|v|$ after a horizontal displacement of $\frac1n$.
	Hence the contribution to $d_E$ is at most $\frac1n+b_n^{-1}|v|$
	and the contribution to $d_\tD$ is at most $b_n^{-1}|v|$.
	We also have the estimates
	\[
	\sup_{[\frac{n-1}{n},1]}|W_n\circ T(t)-W_n(t)|\le b_n^{-1}(|v|\circ T^n+|v|).
	\]
	Hence
	\[
	d_{F'}(W_n^{F'}\circ T,W_n^{F'})\le \tfrac1n+2b_n^{-1}|v|
	+2b_n^{-1}|v|\circ T^n.
	\]
	The result follows since $|v|\circ T^n=_\mu |v|$ and $b_n^{-1}|v|\to0$ a.e.
	\end{proof}

	\begin{cor} \label{cor:SDC}
	To prove that $W_n^{F'}\to_\mu L_\alpha^{F'}$ in $F'$, it suffices
	to prove that $W_n^{F'}\to_{\mu_X} L_\alpha^{F'}$ in $F'$.
	\end{cor}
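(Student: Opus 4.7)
The plan is to deduce the corollary directly from Zweimuller's theorem on strong distributional convergence~\cite{Zweimuller07}. Recall the content of this theorem: if $T$ is an ergodic measure-preserving transformation of a probability space $(M,\mu)$ and $Y_n:M\to \mathcal{Y}$ are random variables taking values in a separable (pseudo)metric space with $d(Y_n\circ T, Y_n)\to_\mu 0$, then for every probability measure $\nu$ absolutely continuous with respect to $\mu$, the law of $Y_n$ under $\nu$ has the same limit (if any) as the law under any other such $\nu'$.

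To apply this, I would verify three ingredients. First, the asymptotic shift-invariance hypothesis $d_{F'}(W_n^{F'}\circ T, W_n^{F'}) \to_\mu 0$ is exactly what Lemma~\ref{lem:SDC} provides. Second, $F'$ equipped with $d_{F'}$ is separable by~\cite[Proposition~A.3(a)]{Freitas2Toddsub}, so Zweimuller's theorem is available for $F'$-valued random variables. Third, $\mu_X$ is by definition the normalised restriction of $\mu$ to the set $X$ of positive measure, hence a probability measure absolutely continuous with respect to $\mu$.

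With these three points in place, Zweimuller's theorem upgrades the hypothesis $W_n^{F'}\to_{\mu_X} L_\alpha^{F'}$ to $W_n^{F'}\to_\nu L_\alpha^{F'}$ for every probability $\nu\ll\mu$; specialising to $\nu=\mu$ yields the claim. There is essentially no obstacle here: the corollary is immediate from Lemma~\ref{lem:SDC} once the abstract Zweimuller framework is in hand. The only point requiring a moment's thought is that $d_{F'}$ is a pseudometric and $F'$ is not complete, but neither issue affects Zweimuller's argument, which relies only on separability of the target space together with the shift-invariance condition in order to run the standard averaging argument that eliminates dependence on the reference measure.
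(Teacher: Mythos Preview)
Your proposal is correct and follows essentially the same approach as the paper: both invoke Zweim\"uller's theorem~\cite[Theorem~1]{Zweimuller07}, using Lemma~\ref{lem:SDC} to verify the required shift-invariance condition. You spell out a few details (separability of $F'$, absolute continuity of $\mu_X$) that the paper leaves implicit, but the argument is the same.
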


	\begin{proof}
	We have verified~\cite[Condition~(1)]{Zweimuller07} in Lemma~\ref{lem:SDC}.
	Hence the result follows from~\cite[Theorem~1]{Zweimuller07}.
	\end{proof}

	For $k\ge0$, define the \emph{lap number}
	\[
	N_k: X\to\N, \qquad N_k =\sum_{\ell=1}^k 1_X\circ T^\ell=\max\{n\ge0\colon R_n\le k\}\le k.
	\]
	where $R_n=\sum_{j=0}^{n-1}R\circ f^j$.

	\begin{prop} \label{prop:N}
	$\lim_{n\to\infty} n^{-1}\max_{1\le k\le n}N_k=\big(\int_X R\,d\mu_X\big)^{-1}$ a.e.\ on $(X,\mu_X)$.
	\end{prop}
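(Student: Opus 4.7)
The plan is to observe first that the sequence $k\mapsto N_k$ is already monotone, which trivialises the maximum: since $N_k=\sum_{\ell=1}^k 1_X\circ T^\ell$ is a sum of nonnegative indicators, $N_k$ is nondecreasing in $k$, and hence $\max_{1\le k\le n}N_k=N_n$. The statement therefore reduces to the pointwise asymptotic $N_n/n\to\bigl(\int_X R\,d\mu_X\bigr)^{-1}$ a.e.\ on $(X,\mu_X)$.

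To handle this, I would apply Birkhoff's ergodic theorem to the system $(X,\mu_X,f)$ with observable $R\in L^1(\mu_X)$: since $f$ is ergodic (as $T$ is ergodic and $X$ is a sweep-out set), we obtain $R_n/n\to \bar R:=\int_X R\,d\mu_X$ a.e.\ on $X$. In particular $R_n\to\infty$ a.e., so, since $R_{N_k}\le k<R_{N_k+1}$ by the definition of $N_k$ via $\max\{n\ge0:R_n\le k\}$, we must have $N_k\to\infty$ a.e.\ as $k\to\infty$.

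The conclusion then follows from the standard renewal sandwich: dividing the inequality $R_{N_k}\le k<R_{N_k+1}$ by $N_k$ gives
\[
\frac{R_{N_k}}{N_k}\le \frac{k}{N_k}<\frac{R_{N_k+1}}{N_k+1}\cdot\frac{N_k+1}{N_k}.
\]
Since $N_k\to\infty$ a.e.\ and $R_n/n\to\bar R$ a.e., both the left- and right-hand sides tend to $\bar R$ a.e.\ on $X$. Therefore $k/N_k\to \bar R$, equivalently $N_k/k\to \bar R^{-1}$, a.e.\ on $(X,\mu_X)$, as required.

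There is no real obstacle here: the only subtlety is that the assertion is about a maximum over $k\le n$ rather than $N_n$ itself, but this disappears at once from the monotonicity of $N_k$. Everything else is a direct application of Birkhoff to the induced system and the elementary renewal inversion.
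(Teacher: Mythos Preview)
Your proof is correct and follows essentially the same route as the paper's: both use the sandwich $R_{N_k}\le k<R_{N_k+1}$ together with Birkhoff's ergodic theorem applied to $R$ under $f$ to obtain $N_n/n\to(\int_X R\,d\mu_X)^{-1}$, and you make explicit the step (left as ``the result follows easily'' in the paper) that monotonicity of $k\mapsto N_k$ reduces the maximum to $N_n$.
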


	\begin{proof} 
	By definition of the lap number, $R_{N_n}\le n\le R_{N_{n+1}}$ so
	$n/N_n\to \int_X R\,d\mu_X$ a.e.\ by the pointwise ergodic theorem.
	Hence $n^{-1}N_n\to \big(\int_X R\,d\mu_X\big)^{-1}$ a.e.\  and the result follows easily.
	\end{proof}

	As in~\cite{MZ15,MV20,CFKM20}, we define the sequence of processes
	$U_n\in D$ on the probability space $(X,\mu_X)$,
	\[
	U_n(t)=b_n^{-1}\sum_{k=0}^{N_{[nt]}-1}V\circ f^k.
	\]
	These are rescaled versions of $W_n^V$ with 
	jumps occurring at 
	$$t_{n,j}=R_j/n$$ where $j=N_{[tn]}$. 

	\begin{lemma} \label{lem:U}
	$U_n\to_{\mu_X} L_\alpha$ in $D$ with the $\cJ_1$ topology.
	\end{lemma}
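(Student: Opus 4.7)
The key observation is that $U_n$ is $W_n^V$ evaluated at a random time change: setting
\[
\lambda_n:[0,1]\to[0,1],\qquad \lambda_n(t)=N_{[nt]}/n,
\]
we have $U_n(t)=W_n^V(\lambda_n(t))$, since $N_{[nt]}$ is an integer and hence $[n\lambda_n(t)]=N_{[nt]}$. The first step is to show that $\lambda_n\to \beta\,\mathrm{id}$ uniformly on $[0,1]$ almost everywhere on $(X,\mu_X)$, where $\beta=(\int_X R\,d\mu_X)^{-1}$. Proposition~\ref{prop:N} (in fact, its proof) yields $N_k/k\to\beta$ a.e., hence $\lambda_n(t)\to\beta t$ a.e.\ for each fixed $t$. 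Since $\lambda_n$ is non-decreasing and the limit $t\mapsto\beta t$ is continuous, a standard Glivenko--Cantelli-type argument upgrades pointwise convergence on a dense set to uniform convergence on $[0,1]$.

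Next I would invoke a random time-change (continuous mapping) argument. By hypothesis $W_n^V\to_{\mu_X}\tL_\alpha$ in $(D,\cJ_1)$, and by the previous step $\lambda_n\to \beta\,\mathrm{id}$ uniformly $\mu_X$-a.s. Since the limiting time change $\beta\,\mathrm{id}$ is deterministic, continuous and strictly increasing, joint convergence $(W_n^V,\lambda_n)\to_{\mu_X}(\tL_\alpha,\beta\,\mathrm{id})$ in $(D,\cJ_1)\times(D,\mathrm{unif})$ holds automatically. The composition map $(u,\lambda)\mapsto u\circ\lambda$ is continuous in the $\cJ_1$ topology at pairs $(u,\lambda)$ with $\lambda$ continuous and strictly increasing (see Whitt~\cite{Whitt}, Section~13.2). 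The continuous mapping theorem therefore gives
\[
U_n=W_n^V\circ\lambda_n\;\to_{\mu_X}\; \tL_\alpha(\beta\,\cdot)
\qquad\text{in }(D,\cJ_1).
\]

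Finally I would appeal to self-similarity: any $\alpha$-stable L\'evy process satisfies $\tL_\alpha(c\,\cdot)\stackrel{d}{=} c^{1/\alpha}\tL_\alpha(\cdot)$ as processes. Applied with $c=\beta$, this gives $\tL_\alpha(\beta\,\cdot)\stackrel{d}{=}\beta^{1/\alpha}\tL_\alpha=L_\alpha$, completing the proof.

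The main technical obstacle is the continuity of composition at the pair $(\tL_\alpha,\beta\,\mathrm{id})$. This is delicate because $\lambda_n$ is piecewise constant with jumps (not a homeomorphism), so one cannot directly plug into the $\cJ_1$ definition of convergence using reparametrisations. Passing through Skorohod's representation theorem to get an a.s.\ version and then matching jumps of $W_n^V\circ\lambda_n$ to jumps of $\tL_\alpha\circ(\beta\,\mathrm{id})$ requires exploiting the fact that the oscillations of $\lambda_n$ around its limit are of order $1/n$, which is negligible at the scale at which jumps of $\tL_\alpha$ are well-separated. This step is standard but the bookkeeping needs care; alternatively it can be cited directly from Whitt~\cite{Whitt}.
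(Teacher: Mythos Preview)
Your proposal is correct and follows essentially the same route as the paper: write $U_n=W_n^V\circ\lambda_n$ with $\lambda_n(t)=n^{-1}N_{[nt]}$, show $\lambda_n\to\beta\,\mathrm{id}$ uniformly a.e., deduce joint convergence $(W_n^V,\lambda_n)\to(\tL_\alpha,\beta\,\mathrm{id})$ since the second limit is deterministic, and apply the continuous mapping theorem via continuity of composition at strictly increasing continuous time changes. The paper writes the conclusion as $\tL_\alpha\circ\kappa=L_\alpha$ without comment, whereas you correctly make explicit that this identification is in law via the self-similarity $\tL_\alpha(\beta\,\cdot)\stackrel{d}{=}\beta^{1/\alpha}\tL_\alpha$; your final paragraph worrying about the continuity of composition is unnecessary since this is a standard cited result (the paper simply invokes it), but it does no harm.
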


	\begin{proof}
	This is a consequence of the fact that
	$W_n^V\to_{\mu_X} \tL_\alpha$ in $D$ with the $\cJ_1$ topology.
	For completeness, we give the main steps in the argument following~\cite[Lemma~3.4]{MZ15}.
	Throughout $D$ is endowed with the $\cJ_1$ topology (rather than the $\cM_1$ topology used in~\cite{MZ15}).

	For $n \ge 1$ and $t\in[0,1]$, we let $\kappa_n(t) = n^{-1}N_{[tn]}$.
	Then $U_n (t) = W^V_n(\kappa_n (t))$ on $X$.
	We regard $U_n$, $W^V_n$, $L_\alpha$, $\tL_\alpha$ and $\kappa_n$ as random elements of $D$. Note that $\kappa_n\in D_\uparrow = \{g \in D:
	g(0) \ge 0 \text{ and $g$ nondecreasing}\}$. Let $\kappa$ denote the constant random element of $D$ given by $\kappa(t)(x) = t/\int_X R\,d\mu_X$.
	By Proposition~\ref{prop:N}, $\kappa_n(\cdot)(x) \to \kappa(\cdot)(x)$ uniformly on $[0,1]$ for $\mu_X$-a.e.\ $x\in X$.
	Hence, $\kappa_n \to_{\mu_X} \kappa$ in $D$.
	But then we automatically get
	$(W^V_n , \kappa_n ) \to_{\mu_X} (\tL_\alpha, \kappa)$ in $D^2$
	since 
	$W^V_n\to_{\mu_X} \tL_\alpha$ in $D$ and
	the limit $\kappa$ of the second component is deterministic.

	The composition map $D \times D_\uparrow \to D$, $(g, v)\mapsto g \circ v$, is continuous at every pair $(g, v)$ with
	$v \in C_{\Uparrow} = \{g \in D: g(0) \ge 0 \text{ and $g$ strictly increasing and continuous}\}$. By the continuous mapping theorem
	$U_n=W^V_n \circ \kappa_n \to_{\mu_X} \tL_\alpha \circ \kappa=L_\alpha$ in $D$ as required.~
	\end{proof}

	Define the functional
	\[
	\chi:D\to F', \qquad \chi u=(u,\disc_u,\{e_u^\tau\}_{\tau\in\disc_u}),
	\]
	where $e_u^\tau:[0,1]\to\R^d$, $\tau\in\disc_u$, is given by
	\[
	e_u^\tau(t)=u(\tau^-)+|\Delta u(\tau)|\Pi(\Delta u(\tau))(t)
+t\big\{\Delta u(\tau)-|\Delta u(\tau)|\Pi(\Delta u(\tau))(1)\big\}.
	\]
	In particular, $L_\alpha^{F'}=\chi L_\alpha$.
	Moreover, $L_\alpha$ is a continuity point of $\chi$ with probability one.
	(The discontinuity points of $\chi$ arise when $\Delta u(\tau)/|\Delta u(\tau)|$ is equidistant to distinct $\omega_i$.)

\begin{cor} \label{cor:U}
	$\chi U_n \to_{\mu_X} L_\alpha^{F'}$ in $F'$.
\end{cor}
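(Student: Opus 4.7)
The plan is to deduce the corollary from Lemma~\ref{lem:U} by a continuous mapping argument applied to the functional $\chi:D\to F'$. Since $L_\alpha^{F'}=\chi L_\alpha$ by construction, it suffices to show that $\chi$ is continuous at $L_\alpha$ almost surely (from $(D,\cJ_1)$ to $F'$), and then invoke the continuous mapping theorem.

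First I would verify that $\chi$ is continuous at every $u\in D$ with the property that $\Delta u(\tau)/|\Delta u(\tau)|\in\{\omega_i\}_{i\in\cI}$ for every $\tau\in\disc_u$. For such $u$, each jump direction is strictly closer to its corresponding $\omega_i$ than to any other $\omega_j$, so $\Pi$ is continuous at $\Delta u(\tau)$ for every $\tau\in\disc_u$. Given $u_n\to u$ in $\cJ_1$ via reparametrisations $\lambda_n\to\operatorname{id}$ with $u_n\circ\lambda_n\to u$ uniformly, each $\tau\in\disc_u$ is matched by a discontinuity $\tau_n\in\disc_{u_n}$ with $\lambda_n(\tau_n)\to\tau$ and $\Delta u_n(\tau_n)\to\Delta u(\tau)$; hence $\Pi(\Delta u_n(\tau_n))=\Pi(\Delta u(\tau))$ for all $n$ large. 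The excursions $e_{u_n}^{\tau_n}$ therefore converge uniformly to $e_u^\tau$, and so their bounding boxes converge in Hausdorff distance. Contributions from the remaining (small) discontinuities are controlled by the fourth bullet in the definition of $F'$, so $d_E(\pi_E\chi u_n,\pi_E\chi u)\to 0$ and $d_\tD(\pi_\tD\chi u_n,\pi_\tD\chi u)\to 0$.

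Second, I would observe that $L_\alpha$ almost surely lies in this continuity set. Indeed, the jumps of $L_\alpha$ are distributed according to the spectral measure $\nu=\sum_{i\in\cI}a_i\delta_{\omega_i}$, which is supported on $\{\omega_i\}_{i\in\cI}$, so every jump direction of $L_\alpha$ equals some $\omega_i$ exactly and is therefore not equidistant to two distinct $\omega_i$'s. Applying the continuous mapping theorem to Lemma~\ref{lem:U} then yields $\chi U_n\to_{\mu_X}\chi L_\alpha=L_\alpha^{F'}$ in $F'$.

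The main obstacle is the first step, and specifically the treatment of $\pi_\tD$. The definition of $\pi_\tD$ uses an arbitrary enumeration $\cS=\{\tau_m\}$ of the discontinuity set, and inserts intervals of length $m^{-2}$ in that order. To align $\pi_\tD\chi u_n$ with $\pi_\tD\chi u$, one must choose compatible enumerations — the natural choice is to enumerate by decreasing diameter of the bounding box $K^\tau$, with ties broken consistently. Because $\cJ_1$ convergence pairs the finitely many large jumps of $u_n$ with those of $u$ (both in location and in size), and because the tail of small discontinuities has diameter tending to zero uniformly in $n$, this enumeration is stable under the limit and the corresponding inserted intervals sit at nearly the same positions with the same lengths. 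This reduces $d_\tD$ convergence to uniform convergence of the time-scaled excursions, which was established above.
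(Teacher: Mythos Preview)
Your approach is essentially the same as the paper's: apply the continuous mapping theorem to Lemma~\ref{lem:U}, using that $L_\alpha^{F'}=\chi L_\alpha$ and that $L_\alpha$ is almost surely a continuity point of $\chi$. The paper simply asserts the latter fact (noting that the discontinuity set of $\chi$ consists of those $u$ with some jump direction equidistant to two distinct $\omega_i$) and leaves the verification of continuity of $\chi:(D,\cJ_1)\to F'$ implicit, whereas you spell out the matching of large jumps and the handling of the $\pi_{\tD}$ enumeration; your extra detail is sound and the potential subtlety you flag with the enumeration is harmless since $\tD=D/{\sim}$ quotients out the inserted interval lengths.
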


\begin{proof}
This is immediate from
	Lemma~\ref{lem:U} by the continuous mapping theorem.
\end{proof}

\paragraph{Strategy for the remainder of the proof}
	The interval $[0,1]$ splits into subintervals
	$[t_{n,j},t_{n,j+1}]$, $0\le j\le N_n$,
	Notice that
	$1\in [t_{n,N_n},t_{n,N_n+1})$.
	For simplicity, we may suppose that $1\in (t_{n,N_n},t_{n,N_n+1})$ since this countable set of events occurs with probability one. 
	It is convenient to consider
	the final interval with $j=N_n$ separately.
	In Subsection~\ref{sec:inc}, we 
	show that
	 $d_{F',[t_{n,N_n},1]}\big(W_n^{F'},\chi U_n \big)  \to_{\mu_X}0$.
	Then in Subsection~\ref{sec:comp}, we 
	show that
	 $d_{F',[0,t_{n,N_n}]}\big(W_n^{F'},\chi U_n \big)  \to_{\mu_X}0$.
Combined, we obtain that
	 $d_{F'}\big(W_n^{F'},\chi U_n \big)  \to_{\mu_X}0$.
	By separability of $F'$, it then follows from
Corollary~\ref{cor:U} that $W_n^{F'}\to_{\mu_X} L_\alpha^{F'}$.
	By Corollary~\ref{cor:SDC},
	$W_n^{F'}\to_{\mu} L_\alpha^{F'}$ 
	completing the proof of Theorem~\ref{thm:main}.

	\paragraph{Convention}
	Recall that
	\(
	d_{F'}(\check u_1,\check u_2)=d_E(\pi_E\check u_1,\pi_E\check u_2)+
	d_{\tD}(\pi_{\tD}\check u_1,\pi_{\tD}\check u_2)
	\)
for $\check u_1,\check u_2\in F'$.
	When we write
	 $d_{E,J}\big(\pi_E\check u_1,\pi_E\check u_2\big)$,
	this means that we compute the graphs $\pi_E \check u_1, \pi_E\check u_2$ on $[0,1]$, restrict the graphs to $J\subset[0,1]$, and then compute the Hausdorff distance. Similarly for
	 $d_{\tD,J}\big(\pi_\tD \check u_1,\pi_\tD \check u_2\big)$
	 and $d_{F',J}\big(\check u_1,\check u_2\big)$.

\vspace{2ex}
We will require the following standard consequence of the pointwise ergodic theorem.
\begin{prop} \label{prop:ET}
Suppose that $H\in L^p(X)$, $p\ge1$.
Then $n^{-1/p}\max_{0\le j\le n}H\circ f^j=0$ a.e.\ on $(X,\mu_X)$.
\qed
\end{prop}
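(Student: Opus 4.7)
The plan is to reduce the statement to the Borel--Cantelli lemma applied pointwise to each iterate, and then bootstrap from control on a single iterate $H\circ f^n$ to control on the running maximum.

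First, I would fix $\epsilon>0$ and estimate, using $f$-invariance of $\mu_X$ and the layer-cake bound $\sum_{n\ge1}\mu_X(|H|^p>n)\le\int|H|^p\,d\mu_X$,
\[
\sum_{n=1}^\infty \mu_X\bigl(|H\circ f^n|>\epsilon\, n^{1/p}\bigr)
=\sum_{n=1}^\infty \mu_X\bigl(|H|^p>\epsilon^p n\bigr)
\le \epsilon^{-p}\int_X |H|^p\,d\mu_X<\infty.
\]
By the first Borel--Cantelli lemma, for $\mu_X$-a.e.\ $x\in X$ there exists $N(x)<\infty$ such that $|H(f^n x)|\le \epsilon\, n^{1/p}$ for every $n\ge N(x)$.

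Second, I would pass from single iterates to the running maximum. For any such $x$ and any $n\ge N(x)$, every index $j$ with $N(x)\le j\le n$ satisfies $|H(f^j x)|\le \epsilon j^{1/p}\le \epsilon n^{1/p}$, so
\[
\max_{0\le j\le n}|H(f^j x)|\le \max_{0\le j<N(x)}|H(f^j x)| \,\vee\, \epsilon\, n^{1/p}.
\]
Dividing by $n^{1/p}$ and sending $n\to\infty$ gives $\limsup_n n^{-1/p}\max_{0\le j\le n}|H(f^j x)|\le\epsilon$. Applying this simultaneously along a sequence $\epsilon_k\downarrow0$ on the intersection of the corresponding full-measure sets yields the desired a.e.\ convergence to $0$.

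I anticipate no genuine obstacle: this is a textbook argument whose only subtlety is keeping the Borel--Cantelli estimate aligned with $f$-invariance of $\mu_X$, which is immediate. An alternative route is to apply Birkhoff to $|H|^p\in L^1$ and use that convergence of the Ces\`aro averages forces $n^{-1}|H|^p\circ f^n\to 0$ a.e.; however, the Borel--Cantelli formulation above has the advantage of delivering the maximum directly, without a separate summation-by-parts step.
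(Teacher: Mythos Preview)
Your proof is correct. The paper itself supplies no argument: the proposition is stated with a bare \qed and described as a ``standard consequence of the pointwise ergodic theorem.'' The intended route is thus the alternative you sketch at the end---apply Birkhoff to $|H|^p\in L^1$, deduce $n^{-1}|H|^p\circ f^n\to0$ a.e.\ from convergence of Ces\`aro means, and then pass to the running maximum exactly as you do in your second step.

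Your Borel--Cantelli approach is equally standard and slightly more self-contained: it uses only measure-preservation of $f$ and the layer-cake estimate, bypassing any appeal to Birkhoff. In practice neither route is more general (the Birkhoff argument also needs only measure-preservation, since a.e.\ convergence of averages is enough regardless of whether the limit is constant), and both require the same elementary passage from control of individual iterates to control of the maximum. Your write-up is a clean version of the folklore argument the paper takes for granted.
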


	\subsection{Incomplete excursion on $[t_{n,N_n},1]$}
	\label{sec:inc}

	\begin{prop} \label{prop:inc}
	$d_{F',[t_{n,N_n},1]}(W_n^{F'},\chi U_n )\to_{\mu_X}0$.
	\end{prop}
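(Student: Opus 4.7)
The plan is to reduce $d_{F',I}(W_n^{F'}, \chi U_n)$ on $I=[t_{n,N_n},1]$ to a sum of three terms, each of which I will argue vanishes in probability. The key observation is that both $\pi_E W_n^{F'}|_I$ and $\pi_E \chi U_n|_I$ contain the common point $(t_{n,N_n}, v_{R_{N_n}}/b_n)$: indeed $W_n(t_{n,N_n}) = b_n^{-1}v_{R_{N_n}} = b_n^{-1}\sum_{k=0}^{N_n-1}V\circ f^k = U_n(t_{n,N_n})$, and this value lies in both the profile box $K^{t_{n,N_n}}$ of $\chi U_n$ (as the right endpoint of the excursion there) and in the small step box $e_n^{t_{n,N_n}}$ of $W_n^{F'}$. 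Since two sets sharing a common point have Hausdorff distance bounded by the maximum of their diameters, this controls $d_{E,I}$, and an analogous bound for $d_{\tD,I}$ follows via a reparametrisation fixing the common point.

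Using this reduction I would estimate the three ingredients as follows. First, $|I| = 1 - R_{N_n}/n \leq R\circ f^{N_n}/n \leq n^{-1}\max_{0\leq j\leq n}R\circ f^j$, which tends to $0$ almost surely by Proposition~\ref{prop:ET} applied to $R\in L^1$. Second, on $I$ one has $W_n(t) - v_{R_{N_n}}/b_n = b_n^{-1}v_{[nt]-R_{N_n}}\circ f^{N_n}$, so the vertical extent of $\pi_E W_n^{F'}|_I$ is at most $\bar V\circ f^{N_n}/b_n$ where $\bar V = \sup_{0\leq k\leq R}|v_k|$. Third, the profile box $K^{t_{n,N_n}}$ in $\chi U_n$ has diameter at most $c|V\circ f^{N_n-1}|/b_n$ for some profile-dependent constant $c$. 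These combine to give
\[
d_{F',I}(W_n^{F'},\chi U_n) \leq C\big(|I| + \bar V\circ f^{N_n}/b_n + |V\circ f^{N_n-1}|/b_n\big).
\]
For $\bar V\circ f^{N_n}/b_n$ I would use the comparison $\bar V \leq \diam\range\zeta + 2d_\tD(\xi,\zeta) \leq C_P|V| + 2d_\tD(\xi,\zeta)$, so by hypothesis~\eqref{eq:main} it suffices to show $|V\circ f^{N_n}|/b_n \to 0$ in probability. For the term $|V\circ f^{N_n-1}|/b_n$, this is the size of the last jump of $U_n$ before time $1$, which lies in $[1-\eps,1]$ on the high-probability event $\{|I|\leq\eps\}$; by the $\cJ_1$ convergence $U_n\to_{\mu_X}L_\alpha$ of Lemma~\ref{lem:U} and the fact that $L_\alpha$ has only $O(\eps)$ probability of a jump of given size in $[1-\eps,1]$, the desired bound follows on letting $\eps\to 0$.

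The hard part will be controlling $|V\circ f^{N_n}|/b_n$: this is the size of the \emph{in-progress} excursion at time $1$, corresponding to a hypothetical jump of $U_n$ at $t_{n,N_n+1}>1$ which is not directly visible on $[0,1]$. I would dispose of it by extending the $\cJ_1$ convergence $U_n\to L_\alpha$ to a slightly longer interval $[0,1+\eps]$ (this follows from the same hypotheses at a longer time horizon after an inessential rescaling of $b_n$) and then re-running the previous jump-size argument on $[1,1+\eps]$. Note that for observables where $V$ is strongly correlated with $R$ (as in intermittent maps with $V\approx v(0)R$ on neutral excursions), the in-progress excursion is length-biased with heavier tails than $V$ itself, so a moment-based tightness argument would fail, and routing through $\cJ_1$ convergence of an extended $U_n$ appears essential.
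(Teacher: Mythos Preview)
Your approach is workable in outline but takes a significantly more circuitous route than the paper, and contains one gap that needs filling.

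The paper's argument is much simpler and, notably, uses neither the $\cJ_1$ hypothesis nor hypothesis~\eqref{eq:main} for this step. After reducing (as you do) to showing $\max_{j_n^*\le j\le n}|W_n(j/n)-W_n(j_n^*/n)|\to_{\mu_X}0$ (essentially your $\bar V\circ f^{N_n}/b_n$), the paper passes to the natural extension of $T$ and defines on all of $M$ the function
\[
v^*(x)=\sum_{\ell=0}^{R(T^{-m}x)}|v(T^\ell T^{-m}x)|,
\]
where $m(x)$ is the time since the last visit to $X$. This dominates all partial sums over the in-progress excursion, and crucially $v^*\circ T^n=_\mu v^*$ by measure-preservation on $M$. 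Hence $b_n^{-1}v^*\circ T^n\to_\mu 0$ simply because $b_n^{-1}v^*\to0$ a.e.\ and the laws agree for every $n$. This completely sidesteps the length-biasing phenomenon you correctly identify: viewed stationarily on $(M,\mu)$ rather than on $(X,\mu_X)$, the excursion straddling time $n$ has the \emph{same} distribution for every $n$, so no tightness or functional-convergence argument is needed.

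Your route instead feeds the problem back through hypothesis~\eqref{eq:main} and the $\cJ_1$ convergence, which is heavier than necessary. More importantly, your extension argument for $|V\circ f^{N_n}|/b_n$ has a gap: re-running the jump-size argument on $[1,1+\eps]$ requires that the jump at $t_{n,N_n+1}$ actually lands in $[1,1+\eps]$, i.e.\ that the in-progress excursion returns by time $(1+\eps)n$. You do not address this, and if $R\circ f^{N_n}>\eps n$ then the extended $U_n$ is still flat on $(t_{n,N_n},1+\eps]$ and you learn nothing about $V\circ f^{N_n}$. The gap is fillable --- the ergodic theorem gives $R_{N_n+1}/n\to1$ a.s., hence $t_{n,N_n+1}\to1$ a.s.\ --- but it must be said. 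Your third term $|V\circ f^{N_n-1}|/b_n$ is also unnecessary: the profile box at $t_{n,N_n}$ is handled in the completed-excursions interval $(t_{n,N_n-1},t_{n,N_n}]$, so on the incomplete interval the graph of $\pi_E\chi U_n$ is just a horizontal line.
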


	\begin{proof}
	Write $t_{n,N_n}=\frac{j_n^*}{n}$ where $j_n^*\in\{0,\dots,n-1\}$. 
	Let $k\in\{1,\dots,d\}$.
	Restricted to $[\frac{j_n^*}{n},1]$, 
	the graph $\pi_E p_k\chi U_n\subset\R^2$ consists of a single horizontal line at height
	$p_kW_n(\frac{j_n^*}{n})$.
	The graph $\pi_E p_kW_n^{F'}\subset\R^2$ consists of horizontal line segments at height
	$p_kW_n(\frac{j}{n})$, $j=j_n^*,\dots,n-1$, together with interpolating
	vertical line segments.
	In particular,
	\[
	d_{E,[t_{n,N_n},1]}\big(\pi_E p_k W_n^{F'},\pi_E p_k\chi U_n \big)
	\le \max_{j_n^*\le j\le n} |p_k(W_n(\tfrac{j}{n})-W_n(\tfrac{j_n^*}{n}))|
	\]
	for each $k$ and so
	\[
	d_{E,[t_{n,N_n},1]}\big(\pi_E W_n^{F'},\pi_E\chi U_n \big)
	\le \max_{j_n^*\le j\le n} |W_n(\tfrac{j}{n})-W_n(\tfrac{j_n^*}{n})|.
	\]

	Also, 
	$(\pi_\tD\chi U_n)(t)=W_n(\frac{j_n^*}{n})$ for $t\in[j_n^*/n,1]$
	while the values of $\pi_\tD W_n^{F'}$ lie on the graph of $\pi_E W_n^{F'}$.
	It follows that
	\begin{align*}
	d_{\tD,[t_{n,N_n},1]}\big(\pi_\tD W_n^{F'},\pi_\tD\chi U_n \big) & \le
	\sup_{t_1,t_2\in [j_n^*/n,1]}|(\pi_\tD W_n^{F'})(t_1)-(\pi_\tD\chi U_n )(t_2)|
	\\ & \le \max_{j_n^*\le j\le n} |W_n(\tfrac{j}{n})-W_n(\tfrac{j_n^*}{n})|.
	\end{align*}
	Hence, it suffices to show that
	\[
	 \max_{j_n^*\le j\le n} |W_n(\tfrac{j}{n})-W_n(\tfrac{j_n^*}{n})|\to_{\mu_X} 0.
	\]

	To conclude, we use an argument from~\cite[Appendix~A]{Gouezel07}. 
	Passing to the natural extension, we may suppose without loss of generality that $T:M\to M$ is invertible. 
	Define measurable functions $m:M\to\N$, $v^*:M\to \R^d$ by
	\[
	m(x)=\inf\{k\ge 0:T^{-k}x\in X\}, \qquad 
	v^*(x)=\sum_{\ell=0}^{R(T^{-m}x)}|v(T^\ell(T^{-m}x))|.
	\]
	Notice that at time $t=1$ the process $W_n$ is in the middle of an excursion involving the increment $v\circ T^n$, while $v^*\circ T^n$ is the sum of the absolute values of the increments in that excursion. It follows that
	\[
	|W_n(\tfrac{j}{n})-W_n(\tfrac{j_n^*}{n})| \le b_n^{-1}v^*\circ T^n
	\]
	for $j_n^*\le j\le n$.
	Hence
	\[
	 \max_{j_n^*\le j\le n} |W_n(\tfrac{j}{n})-W_n(\tfrac{j_n^*}{n})|\le
	  b_n^{-1}v^*\circ T^n.
	\]

	Now, $b_n^{-1}v^*\to0$ a.e.\ on $(M,\mu)$ and $v^*\circ T^n=_\mu v^*$, so
	$b_n^{-1}v^*\circ T^n\to_\mu 0$.
	Also, $\mu_X=(\mu(X))^{-1}\mu|_X$ and hence
	$b_n^{-1}v^*\circ T^n\to_{\mu_X} 0$.
	 It follows that $\max_{j_n^*\le j\le n} |W_n(\frac{j}{n})-W_n(\frac{j_n^*}{n})|\to_{\mu_X} 0$
	as required.
	\end{proof}

	\subsection{Completed excursions on $[0,t_{n,N_n}]$}
	\label{sec:comp}

	Recall that $\chi U_n \in F'$ is defined by adjoining excursions that are scaled versions of the profiles $P_i$, $i\in\cI$. It is convenient also to define elements
	$\tU_n\in F'$ by adjoining dynamical excursions. Accordingly, define
	\[
	\tU_n=\big(U_n,\tcS_n,\{\te_{U_n}^\tau\}_{\tau\in\cS_{U_n}}\big)
	\]
	where $\tcS_n=\{t_{n,1},\dots,t_{n,N_n}\}$ and
	\[
	(\te_{U_n}^\tau)(t)=U_n(t_{n,j})+b_n^{-1}v_{[tR]}\circ f^j
	\quad\text{for $\tau=t_{n,j+1}$.}
	\]

	In the next two propositions, we consider the distances
	$d_{F'}(W_n^{F'},\tU_n)$,
	and
	$d_{F'}(\tU_n,\chi U_n)$
	on the interval $[0,t_{n, N_n}]$.

	\begin{prop}  \label{prop:comp1}
	$d_{F',[0,t_{n,N_n}]}( W_n^{F'},\tU_n)\to_{\mu_X}0$.
	\end{prop}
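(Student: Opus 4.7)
The plan is to split $[0,t_{n,N_n}]$ into the $N_n$ completed return intervals $[t_{n,j},t_{n,j+1}]$, bound the contribution to $d_{F'}$ on each separately, and take the supremum over $j$. The key observation driving the estimates is that within the $j$-th interval, both $W_n^{F'}$ and $\tU_n$ pass through exactly the same ordered sequence of Birkhoff partial sums $y_{j,k}:=W_n(t_{n,j})+b_n^{-1}v_k(f^jx)$ for $k=0,\dots,R(f^jx)$; they differ only in how these values are packaged into the excursion triple.

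For the $d_E$ contribution, the graph of $\pi_E\tU_n$ restricted to $[t_{n,j},t_{n,j+1}]$ consists of one horizontal plateau at height $y_{j,0}$ together with the slab $\{t_{n,j+1}\}\times K^{t_{n,j+1}}$, where $K^{t_{n,j+1}}$ is the coordinate-wise bounding box of $\{y_{j,k}\}_k$. The graph of $\pi_EW_n^{F'}$ on the same interval is a staircase of short plateaus at heights $y_{j,k}$ joined by the consecutive-pair boxes $K_n^{(R_j+k)/n}=[[y_{j,k-1},y_{j,k}]]$. Both graphs sit inside the horizontal slab $[t_{n,j},t_{n,j+1}]\times\R^d$ of width $R(f^jx)/n$, so I would bound their Hausdorff distance by $R(f^jx)/n$ (the horizontal cost of matching time coordinates) plus the worst distance from a point of $K^{t_{n,j+1}}$ to the union of the $K_n^\tau$ (the vertical cost). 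For $d=1$ this latter gap vanishes, since the union of the consecutive plates equals the bounding box; for $d\ge2$ it is at most $\diam K^{t_{n,j+1}}\le 2b_n^{-1}v^*\circ T^{R_j}$, where $v^*$ is the excursion-sum observable from the proof of Proposition~\ref{prop:inc}.

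For the $d_\tD$ contribution, I would exploit the reparametrisation freedom of $\tD$. After the $\pi_\tD$ construction both paths traverse the same ordered sequence $y_{j,0},\dots,y_{j,R(f^jx)}$: $\pi_\tD\tU_n$ packs it into a single inserted interval of length $m^{-2}$ attached to $t_{n,j+1}$, while $\pi_\tD W_n^{F'}$ distributes it across $R(f^jx)$ original steps of length $n^{-1}$ alternating with microscopic step-function decorations. Mapping the $\tU_n$-inserted interval homeomorphically onto the mixed stretch used by $W_n^{F'}$ for the same excursion (and taking $\lambda$ close to the identity elsewhere) produces paths that visit the $y_{j,k}$ in the same order, giving a sup-norm discrepancy per excursion bounded by $b_n^{-1}\max_k|v_k\circ f^j|\le b_n^{-1}v^*\circ T^{R_j}$.

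Combining the two bounds and taking the supremum over $0\le j<N_n\le n$, the proposition reduces to the two a.e.\ maximum-ergodic facts $n^{-1}\max_{0\le j\le n}R\circ f^j\to 0$ (Proposition~\ref{prop:ET} with $p=1$ applied to $R\in L^1(X,\mu_X)$) and $b_n^{-1}\max_{0\le m\le n}v^*\circ T^m\to 0$ a.e.\ (by the same maximum-type argument on $(M,\mu)$ combined with the regularly-varying growth of $b_n$, in the spirit of the final step of Proposition~\ref{prop:inc}). The main obstacle I anticipate is constructing the reparametrisation $\lambda$ globally so that the local per-excursion $d_\tD$ bound survives stitching across all $N_n\le n$ excursions: this relies on $\pi_\tD\tU_n$ and $\pi_\tD W_n^{F'}$ sharing the same finite total inserted length $s=\sum_m m^{-2}$, so that local reparametrisations can be concatenated with only multiplicative rather than additive stretching. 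A secondary obstacle, relevant only for $d\ge2$, is that $K^{t_{n,j+1}}$ need not be covered by the consecutive-pair plates $K_n^\tau$, which is why the horizontal width $R\circ f^j/n$ of the time slab must be used to absorb any fibre-wise vertical gap.
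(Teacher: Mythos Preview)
Your decomposition into return blocks and the $d_E$ bound $n^{-1}\max_{j\le n}R\circ f^j$ (for $d=1$) are exactly the paper's. The genuine gap is the claim $b_n^{-1}\max_{0\le m\le n}v^*\circ T^m\to 0$, which is false in the regime of interest: on $X$ one has $v^*\asymp \sum_{\ell\le R}|v\circ T^\ell|$, which carries the same $\alpha$-heavy tail as $|V|$, so $\max_{j\le n}v^*\circ f^j$ is of order $n^{1/\alpha}\sim b_n$ rather than $o(b_n)$. Proposition~\ref{prop:inc} controls only the single time $m=n$ via $v^*\circ T^n=_\mu v^*$, not a running maximum, and Proposition~\ref{prop:ET} would need $v^*\in L^p$ for some $p>\alpha$, which fails. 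Since you invoke this estimate both for the $d_\tD$ bound and for the $d\ge2$ vertical gap in $d_E$, the argument as written does not close.

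The paper sidesteps the issue entirely. For $d_\tD$ it observes that on each block (and hence globally on $[0,t_{n,N_n}]$) both $\pi_\tD W_n^{F'}$ and $\pi_\tD\tU_n$ are concatenations of \emph{identical} constant pieces $y_{j,0},\dots,y_{j,R(f^jx)}$ in the same order, so a piecewise-linear $\lambda$ matching jump times gives $d_\tD=0$ exactly; the ``global stitching obstacle'' you anticipate never arises, and no $v^*$-type estimate is needed. For $d_E$ the paper works coordinate by coordinate: in each coordinate the one-dimensional bounding interval $J_{n,j}$ is precisely the union of the consecutive one-dimensional segments (your own observation for $d=1$), so the vertical gap vanishes coordinatewise and only the horizontal width $n^{-1}R\circ f^j$ survives, requiring just $R\in L^1$.
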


	\begin{proof}
	First, we consider 
	$d_{E,[0,t_{n,N_n}]}(\pi_E W_n^{F'},\pi_E\tU_n)$.

	Let $k\in\{1,\dots,d\}$.
	The graph $\pi_E p_k W_n^{F'}$ consists of the graph of $p_k W_n$ together with vertical line segments joining
	$(\frac{j}{n}, b_n^{-1}p_k v_{j-1})$ to $(\frac{j}{n}, b_n^{-1}p_k v_j)$ for $j=1,\dots,{n-1}$.
	Define
	\[
	q_{\rm min}=\min_{0\le\ell\le R}p_k v_\ell,
	\quad q_{\rm max}=\max_{0\le\ell\le R} p_k v_\ell.
	\]
	Then
	 the graph $\pi_E p_k \tU_n$ is obtained 
	from the graph of $p_k U_n$ by adjoining the line segments\footnote{We use the abbreviation $x+c[u_1,u_2]\circ f^j$ for $[x+c(u_1\circ f^j),x+c(u_2\circ f^j)]$.}
	\[
	\{t_{n,j+1}\}\times J_{n,j},
	\qquad J_{n,j}=
	p_kU_n(t_{n,j})+b_n^{-1}[q_{\rm min},
	q_{\rm max}]\circ f^j
	\]
	for $j=0,\dots,N_n-1$.
	The graphs are shown schematically in Figure~\ref{fig:comp1}.

	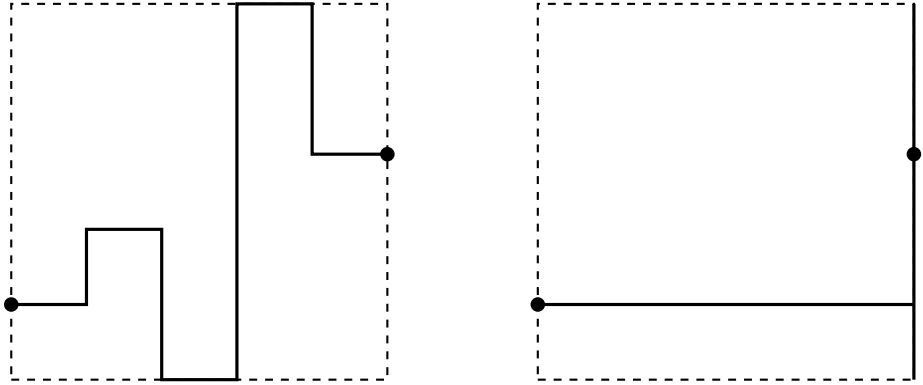
\begin{figure}[h]
	\centering
	\begin{tikzpicture}[thick, scale=0.5]

	%square 1

	\draw[ dashed] (0,0) -- (10,0)--(10, 10)--(0, 10)--(0,0);
	\draw[-, very thick] (0, 2)--(2, 2)--(2, 4)--(4,4)--(4, 0)--(6, 0)--(6, 10)--(8, 10)--(8, 6)--(10, 6);
	\fill (0,2) circle (5.5pt);
	\fill (10,6) circle (5.5pt);

	%square 2

	\draw[ dashed] (14,0) -- (24,0)--(24, 10)--(14, 10)--(14,0);
	\draw[-, very thick] (14, 2)--(24, 2);
	\draw[-, very thick] (24, 0)--(24, 10);
	\fill (14,2) circle (5.5pt);
	\fill (24,6) circle (5.5pt);

	\end{tikzpicture}

	\caption{Graphs of $\pi_E p_k W_n^{F'}$ (left) and $\pi_E p_k \tU_n$ (right) on an interval $(t_{n,j},t_{n,j+1}]$ of length $\frac{5}{n}$.
	The dashed lines show the box $[t_{n,j},t_{n,j+1}]\times J_{n,j}$.
	The dots show the points $(t_{n,j},W_n(t_{n,j}))$ and
	$(t_{n,j+1},W_n(t_{n,j+1}))$.}
	\label{fig:comp1}
	\end{figure}

	On the interval $(t_{n,j},t_{n,j+1}]$, $0\le j\le N_n-1$,
	the graphs $\pi_E p_kW_n^{F'}$ and $\pi_E p_k\tU_n$ lie entirely within the box 
	\[
	[t_{n,j},t_{n,j+1}]\times J_{n,j}.
	\]
	This box has width $n^{-1}R\circ f^j$.
	The graph $\pi_E p_k \tU_n$ contains $J_{n,j}$ which is the right-hand side of this box.
	Hence every point in the graph $\pi_E p_k W_n^{F'}$ lies within distance 
	$n^{-1}R\circ f^j$ of $\pi_E p_k \tU_n$.
	On the other hand, $J_{n,j}$ is by definition the union of horizontal translates of vertical line segments in $\pi_E p_k W_n^{F'}$ so every point in $J_{n,j}$ lies within distance $n^{-1}R\circ f^j$ of $\pi_E p_k W_n^{F'}$. The remaining horizontal line segment 
	$(t_{n,j},t_{n,j+1}]\times \{p_kU_n(t_{n,j})\}$ 
	in $\pi_E p_k \tU_n$
	is within distance $n^{-1}R\circ f^j$ of the point 
	$(t_{n,j},p_kU_n(t_{n,j}))$  which also lies on $\pi_E p_k W_n^{F'}$.
	Altogether, we have shown that
	$d_{E,(t_{n,j},t_{n,j+1}]}(\pi_E p_k W_n^{F'},\pi_E p_k \tU_n)\le n^{-1}R\circ f^j$.
	Hence
	\[
	d_{E,[0,t_{n,N_n}]}(\pi_E W_n^{F'},\pi_E\tU_n)\le 
	n^{-1}\max_{0\le j\le N_n-1}R\circ f^j\le  n^{-1}\max_{0\le j\le n}R\circ f^j.
	\]
	By Proposition~\ref{prop:ET},
	 $n^{-1}\max_{0\le j\le n}R\circ f^j\to0$  a.e.\ on $(X,\mu_X)$,
	and it follows that 
	\[
	d_{E,[0,t_{n,N_n}]}(\pi_E W_n^{F'},\pi_E\tU_n)\to_{\mu_X} 0.
	\]

	It remains to consider $d_{\tD,[0,t_{n,N_n}]}(\pi_\tD W_n^{F'},\pi_\tD\tU_n)$.
	Restricting to $(t_{n,j},t_{n,j+1}]$, $0\le j\le  N_n-1$,
	the function $\pi_\tD \tU_n$ is a concatenation of the constant function $U_n(t_{n,j})$
	followed by the excursion $U_n(t_{n,j})+b_n^{-1}v_{[tR]}\circ f^j$. The latter is a concatenation of the functions $U_n(t_{n,j})+b_n^{-1}v_\ell\circ f^j$ for $0\le\ell\le R$.
Hence $\pi_\tD \tU_n$ is a concatenation of the functions $U_n(t_{n,j})+b_n^{-1}v_\ell\circ f^j$ for $0\le\ell\le R$.
But $\pi_\tD W_n^{F'}$ is a concatenation of the same functions and in the same order. Since concatenation is associative under reparametrisations of time,
$\pi_{\tD,(t_{n,j},t_{n,j+1]}]}(\pi_\tD W_n^{F'},\pi_\tD\tU_n)=0$.
Hence
$d_{\tD,[0,t_{n,N_n}]}(\pi_\tD W_n^{F'},\pi_\tD\tU_n)=0$ completing the proof.
\end{proof}

\begin{prop}  \label{prop:comp2}
$d_{F',[0,t_{n,N_n}]}(\chi U_n, \tU_n)\to_{\mu_X}0$.
\end{prop}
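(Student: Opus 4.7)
The plan is to exploit the fact that $\chi U_n$ and $\tU_n$ are built from the same underlying skeleton $U_n$ by the same procedure, the only difference being the shape of the excursion attached at each jump time $t_{n,j+1}$. The entire discrepancy is therefore localised inside the excursion intervals, and the hypothesis~\eqref{eq:main} is tailor-made to control it.

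First I would identify the two excursions explicitly. Using $U_n(t_{n,j+1}^-)=U_n(t_{n,j})$ and $\Delta U_n(t_{n,j+1})=b_n^{-1}V\circ f^j$, and unpacking the definitions of $\chi$, $\Pi$ and $\zeta$ (noting that $\Pi$ depends only on direction), a direct computation gives
\[
e_{U_n}^{t_{n,j+1}}(t)=U_n(t_{n,j})+b_n^{-1}\zeta\circ f^j(t),
\qquad
\te_{U_n}^{t_{n,j+1}}(t)=U_n(t_{n,j})+b_n^{-1}\xi\circ f^j(t).
\]
Both excursions start at $U_n(t_{n,j})$ and end at $U_n(t_{n,j+1})$, so the two elements of $F'$ differ only in the trajectory between these endpoints, and only by $b_n^{-1}(\xi-\zeta)\circ f^j$ there.

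For the $d_{\tD}$ contribution, both $\pi_{\tD}\chi U_n$ and $\pi_{\tD}\tU_n$ are concatenations of identical constant base pieces (from $U_n$) interleaved with their respective excursions. Applying the identity reparametrisation on each base piece and a near-optimal reparametrisation on each excursion interval, and gluing, I would obtain
\[
d_{\tD,[0,t_{n,N_n}]}(\pi_{\tD}\chi U_n,\pi_{\tD}\tU_n)\le b_n^{-1}\max_{0\le j\le N_n-1}d_{\tD}(\xi,\zeta)\circ f^j,
\]
which tends to zero in $\mu_X$-probability by~\eqref{eq:main}. For the $d_E$ contribution, both graphs coincide on the open intervals $(t_{n,j},t_{n,j+1})$ where $U_n$ is constant, so only the product bounding boxes $K^{t_{n,j+1}}$ differ. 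If $d_{\tD}(\xi,\zeta)(f^jx)\le \epsilon$ then some reparametrisation makes the excursions uniformly $\epsilon$-close, and since reparametrisations preserve ranges, each coordinate interval $[\inf p_k\xi, \sup p_k\xi]$ lies within $\epsilon$ of $[\inf p_k\zeta, \sup p_k\zeta]$ in Hausdorff distance, so the product boxes are within $\sqrt{d}\,\epsilon$. Taking the maximum over $j$ yields
\[
d_{E,[0,t_{n,N_n}]}(\pi_E\chi U_n,\pi_E\tU_n)\le \sqrt{d}\,b_n^{-1}\max_{0\le j\le N_n-1}d_{\tD}(\xi,\zeta)\circ f^j \to_{\mu_X}0,
\]
once again by~\eqref{eq:main}.

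The only genuinely delicate step is the identification of $e_{U_n}^{t_{n,j+1}}$ with $U_n(t_{n,j})+b_n^{-1}\zeta\circ f^j$; this is precisely the purpose of the slightly awkward-looking definition of $\zeta$ in~\eqref{eq:main} (cf.\ Remark~\ref{rmk:zeta}), and once in place, the rest of the argument is a mechanical application of the hypothesis together with the fact that both $\pi_E$ and $\pi_{\tD}$ are Lipschitz in the obvious way with respect to coordinate-wise uniform and $d_{\tD}$ changes of the excursions.
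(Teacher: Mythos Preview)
Your approach is essentially the paper's: identify $e_{U_n}^{t_{n,j+1}}=U_n(t_{n,j})+b_n^{-1}\zeta\circ f^j$ and $\te_{U_n}^{t_{n,j+1}}=U_n(t_{n,j})+b_n^{-1}\xi\circ f^j$, then bound both $d_{\tD}$ and $d_E$ piecewise by $b_n^{-1}d_{\tD}(\xi,\zeta)\circ f^j$ and invoke hypothesis~\eqref{eq:main}. One point you glossed over, which the paper addresses explicitly, is that $\chi U_n$ attaches excursions only at $\disc_{U_n}$, whereas $\tU_n$ attaches them at all of $\tcS_n=\{t_{n,1},\dots,t_{n,N_n}\}$; when $t_{n,j+1}\in\tcS_n\setminus\disc_{U_n}$ you have $V\circ f^j=0$, hence $\zeta\circ f^j\equiv0$ by the very definition of $\zeta$, so one may harmlessly insert the constant ``excursion'' $U_n(t_{n,j})$ there without altering either $\pi_{\tD}\chi U_n$ or $\pi_E\chi U_n$, after which your argument goes through verbatim.
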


\begin{proof}
We claim that
\[
d_{F',[0,t_{n,N_n}]}(\chi U_n,\tU_n)
\le b_n^{-1}\max_{0\le j\le n}d_\tD(\xi,\zeta)\circ f^j.
\]
The result then follows by hypothesis~\eqref{eq:main}.

It remains to prove the claim.
In general, $\disc_{U_n}\subset\tcS_n=\{t_{n,j}:1\le j\le N_n\}$.
We first consider the slightly simpler case 
$\disc_{U_n}=\tcS_n$ for all $n$. Then
\[
\chi U_n=(U_n,\tcS_n,\{e_{U_n}^\tau\}_{\tau\in \cS_{U_n}}), \qquad
\tU_n=(U_n,\tcS_n,\{\te_{U_n}^\tau\}_{\tau\in \cS_{U_n}}),
\]
where 
\begin{align*}
e_{U_n}^\tau(t) & = U_n(t_{n,j})+ |\Delta U_n(\tau)| \Pi(\Delta U_n(\tau))(t)  + t\big\{\Delta U_n(\tau) -|\Delta U_n(\tau)|\Pi(\Delta U_n(\tau))(1) \big\},
\\
\te_{U_n}^\tau(t) & =U_n(t_{n,j})+b_n^{-1}v_{[tR]}\circ f^j,
\end{align*}
for $\tau=t_{n,j+1}$.

Note that $\Delta U_n(\tau)= b_n^{-1}V\circ f^j$.
Hence
\begin{align*}
e_{U_n}^\tau(t) & 
=U_n(t_{n,j})+b_n^{-1}\big\{
|V| \Pi(V)(t)  + t\big\{V-|V|\Pi(V)(1)
\big\} \circ f^j
\\ & =U_n(t_{n,j})+b_n^{-1} \zeta(t)\circ f^j.
\end{align*}
Also,
\[
\te_{U_n}^\tau(t)  =U_n(t_{n,j})+b_n^{-1}\xi(t)\circ f^j.
\]

On the interval $(t_{n,j},t_{n,j+1}]$, it follows that
$\pi_\tD \tU_n$ is the concatenation of the constant function $U_n(t_{n,j})$ with $U_n(t_{n,j})+b_n^{-1} \xi(t) \circ f^j$, while
$\pi_\tD \chi U_n$ is the concatenation of the constant function $U_n(t_{n,j})$ with $U_n(t_{n,j})+b_n^{-1}\zeta(t)\circ f^j$. 
Hence 
\[
d_{\tD,(t_{n,j},t_{n,j+1}]}(\pi_\tD \tU_n,\pi_\tD \chi U_n)
\le b_n^{-1}d_\tD(\xi,\zeta)\circ f^j
\]
and it follows that
\[
d_{\tD,[0,t_{n,N_n}]}(\pi_\tD \tU_n,\pi_\tD \chi U_n)
\le b_n^{-1}\max_{0\le j\le n}d_\tD(\xi,\zeta)\circ f^j.
\]

Also,
$d_{E,(t_{n,j},t_{n,j+1}]}(\pi_E \tU_n,\pi_E \chi U_n)=b_n^{-1}H\circ f^j$
where $H$ is the Hausdorff distance between the smallest closed boxes containing 
the ranges of the functions $\xi(t)$
and $\zeta(t)$ for $t\in[0,1]$.
(So $H=\max_{1\le k\le d}H_k$ where $H_k$ is the Hausdorff distance between 
the smallest closed interval containing 
$\{p_k\xi(t):t\in[0,1]\}$ and
the smallest closed interval containing 
$\{p_k\zeta(t):t\in[0,1]\}$.)
In particular,
\[
H\le\sum_{k=1}^d\Big\{\Big|\max_{[0,1]}p_k\xi-\max_{[0,1]}p_k\zeta\Big|\vee
\Big|\min_{[0,1]}p_k\xi-\min_{[0,1]}p_k\zeta\Big|\Big\}
\le d_\tD(\xi,\zeta).
\]
Again,
\[
d_{E,[0,t_{n,N_n}]}(\pi_E \tU_n,\pi_E \chi U_n)
= b_n^{-1}\max_{0\le j\le n}H\circ f^j
\le  b_n^{-1}\max_{0\le j\le n}d_\tD(\xi,\zeta)\circ f^j,
\]
completing the proof of the claim in the case $\disc_{U_n}=\tcS_n$ for all $n$. 

In general, there is the possibility that $t_{n,j+1}\in\tcS_n\setminus \disc_{U_n}$.
On the interval $(t_{n,j},t_{n,j+1}]$, it remains the case that
$\pi_\tD \tU_n$ is the concatenation of the constant function $U_n(t_{n,j})$ with $U_n(t_{n,j})+b_n^{-1} \xi(t) \circ f^j$, while
$\pi_\tD \chi U_n$ is simply the 
constant function $U_n(t_{n,j})$. The latter is equivalent to
the concatenation of $U_n(t_{n,j})$ with 
$U_n(t_{n,j})$. 
But the fact that $t_{n,j+1}$ is not a discontinuity point of $U_n$ means that $V=0$ and hence (by definition) $\zeta(t)\circ f^j=0$.
Hence
$\pi_\tD \chi U_n$ is still the
concatenation of 
$U_n(t_{n,j})$ with
$U_n(t_{n,j})+b_n^{-1}\zeta(t)\circ f^j$. 
\end{proof}

\begin{cor} \label{cor:comp} 
$d_{F',[0,t_{n,N_n}]}(W_n^{F'}, \chi U_n)\to_{\mu_X}0$.
\end{cor}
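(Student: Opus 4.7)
The plan is to simply apply the triangle inequality to chain together the two previous propositions. Specifically, since $d_{F'}$ is a pseudometric (restricted to any subinterval $J\subset [0,1]$ the quantity $d_{F',J}$ inherits the triangle inequality from $d_E$ and $d_\tD$), we have
\[
d_{F',[0,t_{n,N_n}]}(W_n^{F'},\chi U_n)\le d_{F',[0,t_{n,N_n}]}(W_n^{F'},\tU_n)+d_{F',[0,t_{n,N_n}]}(\tU_n,\chi U_n).
\]
The first term on the right tends to zero in $\mu_X$-probability by Proposition~\ref{prop:comp1}, and the second term tends to zero in $\mu_X$-probability by Proposition~\ref{prop:comp2}. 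The sum therefore tends to zero in $\mu_X$-probability.

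There is no real obstacle here; the work was already carried out in the two preceding propositions, which handled the two different types of excursion attachments (the piecewise horizontal/vertical dynamical embedding $\tU_n$ versus the profile-based embedding $\chi U_n$) by reducing them respectively to an ergodic-theoretic bound on $n^{-1}\max_{0\le j\le n}R\circ f^j$ (using Proposition~\ref{prop:ET}) and to the main hypothesis~\eqref{eq:main}. The only minor point to check is that the triangle inequality remains valid after restriction to the interval $[0,t_{n,N_n}]$, which is immediate from the definition of $d_{F',J}$ as a sum of restricted Hausdorff and reparametrisation distances.
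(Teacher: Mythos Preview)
Your proof is correct and is precisely the approach taken in the paper, which simply states that the corollary is immediate from Propositions~\ref{prop:comp1} and~\ref{prop:comp2}. Your added remark that the restricted pseudometric $d_{F',J}$ inherits the triangle inequality makes explicit the one-line justification the paper leaves implicit.
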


\begin{proof} 
This is immediate from Propositions~\ref{prop:comp1} and~\ref{prop:comp2}.
\end{proof}

Thus we have completed the proof of Theorem~\ref{thm:main}.

\section{Examples}
\label{sec:ex}

In this section, we consider examples covered by this paper, expanding on the examples discussed in Section~\ref{sec:informal}.
In Subsection~\ref{sec:Gouezel}, we consider examples with $\alpha\in(0,1)$ where $T:[0,1]\to[0,1]$ is a uniformly expanding map and $v$ is an unbounded scalar observable.
In Subsection~\ref{sec:Young}, we consider examples with $\alpha\in(1,2)$ where $T:M\to M$ is only nonuniformly expanding/hyperbolic but $v:M\to\R^d$ is bounded.

\subsection{Examples with unbounded observables}
\label{sec:Gouezel}

In this subsection, we give details for Examples~\ref{ex:3} and~\ref{ex:5}.

\begin{example}[Example~\ref{ex:3} revisited] \label{ex:3+}
Let $T:M\to M$ be the doubling map, so $M=[0,1]$ and 
$Tx=2x\bmod1$, with ergodic probability measure $\mu=\Leb$.
Fix $\alpha\in(0,1)$ and consider the observable 
\[
v:M\to\R, \qquad v(x)=x^{-1/\alpha}.
\]
Define $c=2^{-1/\alpha}\in(0,1)$.
Let $\cI=\{1\}$ and let $P_1:[0,1]\to\R$ be any monotone increasing step function
with range precisely $\{1-c^j,\,j=0,1,2,\dots\}\cup\{1\}$.
In particular, $P_1(0)=0$, $P_1(1)=\omega_1=1$.

Define $\tL_\alpha$ to be the totally-skewed $\alpha$-stable L\'evy process with
spectral measure $\nu=\delta_1$ as defined in Section~\ref{sec:rv}, and 
let $L_\alpha=c\tL_\alpha$.
Take $b_n=(1-c)^{-1}n^{1/\alpha}$ and define $W_n\in D$ as in~\eqref{eq:Wn}.

By~\cite{Gouezel_doub}, $W_n(1)\to_\mu L_\alpha(1)$.
We claim that $W_n\to_\mu L_\alpha$ in $\cM_1$. Moreover, we have the following convergence result in $F'$:
Define the enriched process
$L_\alpha^{F'}=(L_\alpha,\disc_{L_\alpha},\{e^\tau\})\in F'$ with excursions
\[
e^\tau(t)=L_\alpha(\tau^-)+\Delta L_\alpha(\tau)P_1(t)
\]
at each discontinuity $\tau\in \disc_{L_\alpha}$.
Also, define $W_n^{F'}\in F'$ by attaching trivial excursions as in Section~\ref{sec:main}.
We prove that 
\[
W_n^{F'}\to_\mu L_\alpha^{F'} \quad\text{in $F'$}
\]
by verifying the assumptions of Theorem~\ref{thm:main}.
This is done in Lemmas~\ref{lem:ex3a} and~\ref{lem:ex3b} below.

\vspace{1ex}
The first return map $f=T^R:X\to X$, $X=[\frac12,1]$, is uniformly expanding with (countably many) full branches of constant slope,
and $\mu_X$ is normalised Lebesgue measure on $X$.

For $x\in X$ and $1\le \ell\le R(x)$,
\[
v(T^\ell x)=(2^{\ell -1}Tx)^{-1/\alpha}=
(2^\ell(x-\tfrac12))^{-1/\alpha}=
(x-\tfrac12)^{-1/\alpha}c^{\ell},
\]
so 
\[
v_\ell(x)  =v(x)+(c^{-1}-1)^{-1}(1-c^{\ell-1})(x-\tfrac12)^{-1/\alpha}.
\]
Also, $x-\tfrac12\in(2^{-(R(x)+1)},2^{-R(x)}]$, so
\[
V(x)=v_R(x)= (c^{-1}-1)^{-1}(x-\tfrac12)^{-1/\alpha} + O(1).
\]

\begin{lemma}  \label{lem:ex3a}
$W_n^V\to_{\mu_X} \tL_\alpha$ in the $\cJ_1$ topology.
\end{lemma}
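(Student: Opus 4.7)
The plan is to exploit the Bernoulli structure of the first return map $f$ and to invoke a classical functional stable limit theorem for uniformly expanding Markov maps with regularly varying observables. First, I observe that the family $\{\{R = k\}\}_{k \geq 1}$ is a generating Markov partition for $f$: each cell is the interval $\{x \in X : x - \tfrac{1}{2} \in [2^{-(k+1)}, 2^{-k})\}$, on which $f$ is the affine bijection of slope $2^k$ with image all of $X$. Under the coding $x \mapsto (R \circ f^j(x))_{j \geq 0}$, the system $(f, \mu_X)$ is conjugate to the one-sided Bernoulli shift on $\N^\N$ with i.i.d.\ geometric marginals $\mu_X(R = k) = 2^{-k}$. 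In particular, $R$ and $f(\cdot)$ are independent under $\mu_X$, with $f(\cdot)$ uniformly distributed on $X$.

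Second, combining the asymptotic $V(x) = (c^{-1}-1)^{-1}(x - \tfrac{1}{2})^{-1/\alpha} + O(1)$ computed just above the lemma with the identity $x - \tfrac{1}{2} = 2^{-R(x)} f(x)$ yields
\[
V(x) = (c^{-1} - 1)^{-1}\, 2^{R(x)/\alpha} \, f(x)^{-1/\alpha} + O(1),
\]
where $f(x)^{-1/\alpha} \in [1, 2^{1/\alpha}]$ is bounded. Using the independence of $R$ and $f(\cdot)$ under $\mu_X$ and the geometric tail of $R$, a direct computation shows that $V$ is regularly varying of index $\alpha$ in the sense of Definition~\ref{defn:rv}; the resulting slowly varying factor fits the prescribed normalisation $b_n = (1-c)^{-1} n^{1/\alpha}$, and since $V$ is eventually positive the limiting stable law is totally skewed with spectral measure $\delta_1$, matching the definition of $\tL_\alpha$ from Section~\ref{sec:rv}.

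Third, I would invoke the functional stable limit theorem of Tyran-Kami\'nska~\cite{TyranKaminska10} for piecewise expanding Markov maps of the interval with regularly varying observables. The full-branch Markov structure of $f$ yields the necessary spectral gap of the transfer operator on a standard function space (BV or Lipschitz on the coding), while the regular variation of $V$ supplies the tail hypothesis. This directly produces the claimed $\cJ_1$ convergence $W_n^V \to_{\mu_X} \tL_\alpha$.

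The main obstacle is that $V$ is not a pure function of the first symbol $R$ — the large values of $V \circ f^j$ are driven by large values of the i.i.d.\ sequence $R \circ f^j$, but their precise magnitudes are perturbed multiplicatively by the bounded factor $f(\cdot)^{-1/\alpha}$, which depends on all future symbols. This mild future-dependence rules out a direct reduction to i.i.d.\ Skorohod (because the perturbation, while bounded, is multiplicative on an unbounded leading term), but is exponentially summable in the coding metric and is therefore handled automatically within the Gibbs--Markov framework of~\cite{TyranKaminska10}. An equivalent alternative would be a direct point-process argument: the point process of normalised jumps $\{(j/n, b_n^{-1} V \circ f^j)\}$ converges to a Poisson point process determined by the i.i.d.\ sequence $R \circ f^j$, from which $\cJ_1$ convergence follows since $\alpha < 1$ and hence the limit process is purely a sum of its jumps with no centring required.
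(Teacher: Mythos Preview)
Your approach is essentially the same as the paper's: reduce to Tyran-Kami\'nska's abstract $\cJ_1$ criterion via point process convergence, using that $\alpha\in(0,1)$ so no centring is needed. Two small differences are worth noting. First, the paper does not work with $V$ directly but writes $V=Z+H$ with $Z(x)=(c^{-1}-1)^{-1}(x-\tfrac12)^{-1/\alpha}$ and $H=O(1)$ bounded H\"older, then disposes of $H$ via the $L^2$ moment bound $\big|\max_{k\le n}|\sum_{j\le k}H\circ f^j|\big|_1\ll n^{1/2}$ from~\cite{MTorok12,KellyM16}; this makes the tail computation for $Z$ exact rather than asymptotic and sidesteps your multiplicative-perturbation discussion entirely. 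Second, your step~3 is imprecise: \cite{TyranKaminska10} does not contain a black-box theorem of the form ``spectral gap $+$ regular variation $\Rightarrow$ $\cJ_1$ convergence''; the abstract Theorem~1.2 there requires point process convergence as input, and the paper obtains this from~\cite[Theorem~4.3]{Freitas2Magalhaes20}. Your final paragraph correctly identifies this route, so the gap is only in the exposition of step~3, not in the underlying argument.
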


\begin{proof}
Write $V=Z+H$ where $Z(x)=(c^{-1}-1)^{-1}(x-\tfrac12)^{-1/\alpha}$ and $H=O(1)$.
Since $H$ is bounded and H\"older, it follows by~\cite{MTorok12} or~\cite[Proposition~7.1]{KellyM16} that
$\big|\max_{k\le n}|\sum_{0\le j\le k}H\circ f^j|\big|_1\ll n^{1/2}$.
Hence it suffices to show that
$W_n^{Z}\to_{\mu_X} \tL_\alpha$ in the $\cJ_1$ topology.

Now, $Z(x)>t$ for $t>0$ large if and only if
$\frac12 < x < \frac12+((c^{-1}-1)t)^{-\alpha}$, so
$\mu_X(Z>t)=2\Leb(Z>t)=2 ((c^{-1}-1)t)^{-\alpha}$.
Hence $Z$ is regularly varying of order $\alpha$
and lies in the domain of attraction of a stable law $G_\alpha$ with spectral measure $\nu=\delta_1$, and $b_n=2^{1/\alpha}(c^{-1}-1)^{-1}n^{1/\alpha}
=(1-c)^{-1/\alpha}n^{1/\alpha}$ as described in Section~\ref{sec:rv}.

To obtain convergence of $W_n^Z$ to the corresponding L\'evy process $\tL_\alpha$, we apply~\cite[Theorem~1.2]{TyranKaminska10}.
On $(0,\infty)\times(\R\setminus\{0\})$,
define the sequence of random point processes
$\cN_n=\sum_{j=1}^n \delta_{(\frac{j}{n},b_n^{-1} Z\circ f^{j-1})}$
and the Poisson point process $\cN$ with mean measure $\Leb\times\Pi$ where
$\Pi(B)=\alpha\int_0^\infty 1_B(r)r^{-\alpha-1}dr$.
Since $\alpha\in(0,1)$, it suffices by~\cite[Theorem~1.2]{TyranKaminska10} to
show that $\cN_n\to_{\mu_X}\cN$.
 This holds by~\cite[Theorem~4.3]{Freitas2Magalhaes20}.
\end{proof}

\begin{lemma}  \label{lem:ex3b}
Hypothesis~\eqref{eq:main} is satisfied.
\end{lemma}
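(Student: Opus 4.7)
The plan is to establish the much stronger deterministic bound $d_\tD(\xi,\zeta)(x)\le C$ for every $x\in X$, where $C$ depends only on $\alpha$. Since $b_n=(1-c)^{-1}n^{1/\alpha}\to\infty$, this immediately gives $b_n^{-1}\max_{0\le j\le n} d_\tD(\xi,\zeta)\circ f^j\le b_n^{-1}C\to 0$ deterministically, which is stronger than the required convergence in $\mu_X$-probability.

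The key input is the explicit formula recorded before Lemma~\ref{lem:ex3a}: for $1\le \ell\le R$,
\[
v_\ell(x) = v(x) + A(x)(1-c^{\ell-1}),\qquad A(x)=(c^{-1}-1)^{-1}(x-\tfrac12)^{-1/\alpha},
\]
together with $V=v_R$. A one-line subtraction yields
\[
v_\ell - V(1-c^{\ell-1}) = v(x)\,c^{\ell-1} + A(x)(1-c^{\ell-1})\,c^{R-1}.
\]
Both terms are uniformly $O(1)$: $v(x)\le 2^{1/\alpha}$, while the localisation $x-\tfrac12\in[2^{-R-1},2^{-R})$ imposed by $R(x)=R$ gives $A(x)c^{R-1}\le c^{-2}(c^{-1}-1)^{-1}$. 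Denote this uniform bound by $C_0$. Thus the height sequences of $\xi$ and $\zeta$ agree up to an $O(1)$ shift in the plateau index: $v_{\ell+1}\approx V(1-c^\ell)$.

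To convert this into a bound on $d_\tD$, fix the right-continuous representative of $P_1$ with jump times $0=s_0<s_1<\cdots$ so that $P_1\equiv 1-c^\ell$ on $[s_\ell,s_{\ell+1})$. For $R\ge 3$ I would take $\lambda:[0,1]\to[0,1]$ to be the piecewise linear continuous strictly increasing bijection with $\lambda(0)=0$, $\lambda(s_\ell)=(\ell+1)/R$ for $\ell=1,\dots,R-2$, and $\lambda(1)=1$. This pairs the $\ell$-th $\zeta$-plateau with the $(\ell+1)$-th $\xi$-plateau for $1\le\ell\le R-3$ (error $\le C_0$); it absorbs the height-$0$ plateau of $\zeta$ on $[0,s_1)$ into $\xi\in\{0,v(x)\}$ on $[0,2/R)$ (error $\le v(x)$); and it collapses the entire tail of $\zeta$ on $[s_{R-2},1]$, whose heights lie in $[V(1-c^{R-2}),V]$, into the terminal $\xi$-interval $[(R-1)/R,1]$, whose heights lie in $\{v_{R-1},V\}$, where the discrepancy is controlled by $V-v_{R-1}=A(x)c^{R-2}(1-c)\le c^{-2}$. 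The remaining small cases $R\in\{1,2\}$ use the crude bound $d_\tD(\xi,\zeta)\le V$, which is already $O(1)$ on $\{R\le 2\}$ since $V(x)$ is bounded there by a constant depending only on $\alpha$.

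The one mildly delicate point is the tail: $P_1$ has infinitely many plateaus whereas $\xi$ has only $R+1$ of them, so infinitely many $\zeta$-plateaus must inevitably be collapsed into a single $\xi$-plateau under any admissible $\lambda$. This collapse is harmless precisely because the first-return localisation forces $V$ to be of order $c^{-R}$, so $Vc^{R-2}=O(1)$ uniformly. Combining the three regimes gives the desired uniform constant $C$ and hence~\eqref{eq:main}.
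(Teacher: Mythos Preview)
Your proof is correct and follows essentially the same strategy as the paper: both establish the uniform bound $\sup_{x\in X}d_\tD(\xi,\zeta)<\infty$, from which~\eqref{eq:main} is immediate since $b_n\to\infty$. The only difference is presentational---the paper factors out the common scale $A(x)=(c^{-1}-1)^{-1}(x-\tfrac12)^{-1/\alpha}$ and compares the normalised step functions $g_x(t)=1-c^{[tR]-1}$ and $P_1$ (using monotonicity and the fact that they share the same initial list of plateau values to get $d_\tD(g_x,P_1)\le c^{R-1}$, then multiplies back by $A(x)\ll c^{-R}$), whereas you compare the plateau heights $v_{\ell+1}$ and $V(1-c^\ell)$ directly and build an explicit piecewise-linear reparametrisation; the underlying estimate and the index shift are identical.
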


\begin{proof}
Let $t\in[0,1]$, $x\in X$. Since $V=v_R>0$, it follows from the calculations above and Remark~\ref{rmk:zeta} that
\begin{align*}
\xi(t)(x) & =v_{[tR]}(x)= (x-\tfrac12)^{-1/\alpha}(c^{-1}-1)^{-1}g_x(t)+O(1), \\
\zeta(t)(x) & =v_R(x)P_1(t)= (x-\tfrac12)^{-1/\alpha}(c^{-1}-1)^{-1}P_1(t)+O(1),
\end{align*}
where $g_x(t)=1-c^{[tR(x)]-1}$.

For all $x\in X$, the
functions $g_x$, $P_1$ are monotone increasing on $[0,1]$.
Moreover, there are intervals $[0,t_0(x)]$, $[0,t_1(x)]$ such that
${g_x}|_{[0,t_0(x)]}$ and ${P_1}|_{[0,t_1(x)]}$ take precisely the same values, namely
$\{0,1-c,1-c^2,\dots,1-c^{R(x)-1}\}$.
Furthermore, ${g_x}|_{[t_0(x),1]}\equiv 1-c^{R(x)-1}$
and $P_1([t_0(x),1)\subset [1-c^{R(x)-1},1]$.
Hence $d_\tD(g_x,P_1)\le c^{R(x)-1}$.
Using again that $(x-\tfrac12)^{-1/\alpha}\ll c^{-R(x)}$, it follows that
\[
\sup_{x\in X}d_\tD\big(\xi(\cdot)(x),\zeta(\cdot)(x)\big)<\infty.
\]
Hence hypothesis~\eqref{eq:main} is satisfied.
\end{proof}

\end{example}

\begin{example}[Example~\ref{ex:5} revisited] \label{ex:5+}
Let $T:M\to M$ be the tripling map, so $M=[0,1]$ and 
$Tx=3x\bmod1$, with ergodic probability measure $\mu=\Leb$.
Fix $\alpha\in(0,1)$ and consider the observable 
\[
v:M\to\R, \qquad v(x)=|x-\tfrac18|^{-1/\alpha}-|x-\tfrac38|^{-1/\alpha}.
\]
Define $c=3^{-1/\alpha}\in(0,1)$.
Set $\cI=\{\pm1\}$ and let $P_1:[0,1]\to\R$ be any step function
with values $\{1-(-c)^j,\,j=0,1,2,\dots\}\cup\{1\}$ taken in that order.
Let $P_{-1}=-P_1$. We have $\omega_\pm=\pm1$.

Define $\tL_\alpha$ to be the symmetric $\alpha$-stable L\'evy process with
spectral measure $\nu=\frac{1}{2}(\delta_1+\delta_{-1})$, and 
let $L_\alpha=(\tfrac79)^{1/\alpha}\tL_\alpha$.
Take $b_n=(\frac{36}{7})^{1/\alpha}(c^{-1}-1)^{-1}n^{1/\alpha}$ and define $W_n\in D$ as in~\eqref{eq:Wn}.
Define $W_n^{F'}\in F'$ by adjoining trivial profiles.

Define the enriched process
$L_\alpha^{F'}=(L_\alpha,\disc_{L_\alpha},\{e^\tau\})\in F'$ with excursions
\[
e^\tau(t)=L_\alpha(\tau^-)+\Delta L_\alpha(\tau)P_1(t)
\]
at each discontinuity $\tau\in \disc_{L_\alpha}$.
(Equivalently, attach suitably scaled profiles $P_1$ at positive jumps and $P_{-1}$ at negative jumps.)
We prove that 
\[
W_n^{F'}\to_\mu L_\alpha^{F'} \quad\text{in $F'$}
\]
by verifying the assumptions of Theorem~\ref{thm:main}, thereby recovering by a different method a result of~\cite[Example 2.7]{Freitas2Toddsub}.
This is done in Lemmas~\ref{lem:ex5a} and~\ref{lem:ex5b} below.

\vspace{1ex}
It is convenient to use
cylinder notation with letters $0,1,2$ denoting
$[0,\frac13]$,
$[\frac13,\frac23]$,
$[\frac23,1]$,
respectively.
So for example,
$[020]$ denotes the $3$-cylinder
$[0,\frac13]\cap T^{-1}[\frac23,1]\cap T^{-2}[0,\frac13]$.
We induce on the set 
$X=M\setminus([\frac19,\frac29]\cup[\frac13,\frac49])=
M\setminus([01]\cup[10])$.
Then long returns correspond to elements of
$[0(01)^n]\cup[1(10)^n]\cup[2(01)^n]\cup[2(10)^n]$ for $n$ large.
(Such points are, after one iterate, close to the periodic orbit $\{\frac18,\tfrac38\}$.)
As in Example~\ref{ex:3+}, the first return map $f=T^R:X\to X$ is uniformly expanding with full branches of constant slope,
and $\mu_X$ is normalised Lebesgue measure on $X$.

Write $X=X_1\dot\cup X_2$ where
$X_1=[00]\cup[11]\cup[20]\cup[21]$.
Since $R|_{X_2}=1$, our calculations focus on $x\in X_1$.

\begin{prop} \label{prop:vell}
Let $x\in X_1$, $1\le \ell\le R(x)$.
\begin{itemize}
\item[(i)] If $x\in[00]$, then
$v_\ell(x)=(c^{-1}+1)^{-1}(1-(-c)^{\ell-1})(x-\frac{1}{24})^{-1/\alpha}+O(\ell)$.
\item[(ii)] If $x\in[11]$, then
$v_\ell(x)=-(c^{-1}+1)^{-1}(1-(-c)^{\ell-1})(x-\frac{11}{24})^{-1/\alpha}+O(\ell)$.
\item[(iii)] If $x\in[20]$, then
$v_\ell(x)=(c^{-1}+1)^{-1}(1-(-c)^{\ell-1})(x-\frac{17}{24})^{-1/\alpha}+O(\ell)$.
\item[(iv)] If $x\in[21]$, then
$v_\ell(x)=-(c^{-1}+1)^{-1}(1-(-c)^{\ell-1})(x-\frac{19}{24})^{-1/\alpha}+O(\ell)$.
\end{itemize}
\end{prop}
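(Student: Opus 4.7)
The plan is to exploit the constant–slope-$3$ piecewise linearity of the tripling map together with the explicit oscillation of orbits near the repelling period-two cycle $\{1/8, 3/8\}$. I will lay out the argument in detail for case (i), and then observe that cases (ii)--(iv) follow by the identical scheme, with the base point $1/24$ replaced by $11/24$, $17/24$, or $19/24$, and with the overall sign fixed by whether the first iterate $Tx$ lands near $1/8$ or near $3/8$.

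First, for $x \in [00]$ with return time $R := R(x) \ge 2$, the definition $X = M \setminus ([01] \cup [10])$ forces $T^j x \in [01] \cup [10]$ for $1 \le j \le R-1$, so the orbit alternates between small neighbourhoods of $1/8$ (when $j$ is odd) and $3/8$ (when $j$ is even); call the relevant cycle point $p_j$. The base point is $x_0 = 1/24$, the unique preimage of $1/8$ in $[0,1/3]$. Because $T$ has constant derivative $3$ on each branch and $x, x_0$ lie in the common cylinder determined by the first $R-1$ symbols of $x$, one obtains the exact identity
\[
T^j x - p_j = 3^j (x - \tfrac{1}{24}), \qquad 1 \le j \le R-1,
\]
so that $|T^j x - p_j|^{-1/\alpha} = c^j |x - \tfrac{1}{24}|^{-1/\alpha}$.

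Next, I would write $v(T^j x) = |T^j x - 1/8|^{-1/\alpha} - |T^j x - 3/8|^{-1/\alpha}$ as the sum of its singular piece at $p_j$ and the uniformly bounded piece at the opposite cycle point; since the two cycle points are separated by $1/4$ while the orbit deviations are $O(c^j)$, the non-singular piece is $O(1)$ uniformly in $j$ and in $x$. Combined with the sign pattern $+,-,+,\ldots$ dictated by the parity of $p_j$, this yields
\[
v(T^j x) = (-1)^{j+1} c^j |x - \tfrac{1}{24}|^{-1/\alpha} + O(1), \qquad 1 \le j \le R-1.
\]
The $j=0$ term $v(x)$ is itself $O(1)$ since $[00]$ is bounded away from $\{1/8,3/8\}$. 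Summing from $j=0$ to $\ell-1$, the $\ell$ error terms accumulate into $O(\ell)$, and the geometric identity
\[
\sum_{j=1}^{\ell-1}(-1)^{j+1} c^j = \frac{c}{1+c}\bigl(1 - (-c)^{\ell-1}\bigr) = (c^{-1}+1)^{-1}\bigl(1 - (-c)^{\ell-1}\bigr)
\]
delivers the claimed main term. The degenerate case $R(x)=1$ is consistent since the formula collapses to $O(\ell) = O(1)$, matching $v_1(x) = v(x)$ directly.

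Cases (ii)--(iv) are entirely parallel: on $[11]$, $[20]$, $[21]$ the local branch map is $3x-1$, $3x-2$, $3x-2$ respectively, and equating this to the appropriate cycle point gives the base points $11/24$, $17/24$, $19/24$. The leading sign is $+$ exactly when $Tx$ lands near $1/8$ (cases [00] and [20]) and $-$ when it lands near $3/8$ (cases [11] and [21]), which accounts for the signs stated in (ii) and (iv). I anticipate no genuine obstacle; the only bookkeeping point is confirming that the $O(1)$ bound on the non-dominant singular term in $v(T^jx)$ is uniform in $x \in X_1$ and in $j$, which is immediate from the fact that $|T^j x - \text{(opposite cycle point)}| \ge \tfrac14 - O(c^j)$ stays bounded below.
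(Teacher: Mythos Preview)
Your proof is correct and follows essentially the same approach as the paper: both exploit the constant-slope linearity of $T$ to get the exact relation $T^jx - p_j = 3^j(x - x_0)$, identify the alternating sign pattern from the parity of $j$, bound the non-singular term by $O(1)$, and sum the resulting signed geometric series. Your phrasing via ``$x$ and $x_0 = 1/24$ lie in a common cylinder'' is slightly more conceptual than the paper's explicit inductive formula $T^\ell x = 3^\ell(x-1/24) + b_\ell(x)$, and your use of $|x-\tfrac{1}{24}|^{-1/\alpha}$ is in fact more careful than the paper's $(x-\tfrac{1}{24})^{-1/\alpha}$ since $x-\tfrac{1}{24}$ changes sign on $[00]$, but these are cosmetic differences.
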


\begin{proof}
Define $b_\ell(x)=\begin{cases} \tfrac38 & T^{\ell-1} x\in[0] \\
 \tfrac18 & T^{\ell-1}x \in[1] 
\end{cases}\;.$
Inductively, for $x$ in $\ell$-cylinders $[0101\cdots]$, $[1010\cdots]$,
we have $T^\ell x=3^\ell(x-a(x))+b_\ell(x)$
where $a(x)$ takes values $\frac18,\frac38$ depending on whether $x\in[0]$ or $x\in[1]$.
It follows that
\[
T^\ell x=3^\ell (x-\tfrac{1}{24})+b_\ell(x)
\]
 for $x$ in
$\ell$-cylinders $[0 0101\ldots]$,

Let $x\in[00]$. For $1\le k<R(x)$,
\begin{align*}
v(T^k x) & = \begin{cases}
(T^k x-\tfrac18)^{-1/\alpha}+O(1) & k \text{ odd} \\
-(T^k x-\tfrac38)^{-1/\alpha}+O(1) & k \text{ even}
\end{cases}
\\ & = -(1)^{k+1}(3^k(x-\tfrac{1}{24}))^{-1/\alpha}+O(1)
=-(-c)^k (x-\tfrac{1}{24})^{-1/\alpha}+O(1).
\end{align*}
Hence $v_\ell(x)=-\sum_{k=1}^{\ell-1}(-c)^k(x-\tfrac{1}{24})^{-1/\alpha}+O(\ell)$ completing the proof of (i). The other cases are similar.
\end{proof}

Similarly to Example~\ref{ex:3+}, it follows that
\[
V(x)=v_R(x)= \pm (c^{-1}+1)^{-1}(x-a)^{-1/\alpha}1_{X_1}(x) + O(R(x)),
\]
for the appropriate choices of $\pm$ and $a\in\{\frac{1}{24},\frac{11}{24},\frac{17}{24},\frac{19}{24}\}$.

\begin{lemma}  \label{lem:ex5a}
$W_n^V\to_{\mu_X} \tL_\alpha$ in the $\cJ_1$ topology.
\end{lemma}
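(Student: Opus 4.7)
The strategy is to follow the same template as the proof of Lemma~\ref{lem:ex3a}, decomposing $V$ into a leading heavy-tailed piece $Z$ plus a lower-order remainder $H$, showing that $W_n^H$ is negligible, and reducing the $\cJ_1$ convergence of $W_n^Z$ to a point-process convergence statement handled by Tyran-Kami\'nska~\cite{TyranKaminska10} together with~\cite{Freitas2Magalhaes20}.

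First I would write $V=Z+H$ where, on the disjoint pieces of $X_1$ determined by Proposition~\ref{prop:vell},
\[
Z(x)=\pm (c^{-1}+1)^{-1}(x-a)^{-1/\alpha},
\]
with the sign and centre $a\in\{\tfrac{1}{24},\tfrac{11}{24},\tfrac{17}{24},\tfrac{19}{24}\}$ matching the appropriate sub-cylinder, and $Z\equiv0$ on $X_2$. Proposition~\ref{prop:vell} yields $H=V-Z=O(R)$ pointwise on $X_1$, and on $X_2$ we have $V$ bounded, so $H=O(R)$ throughout. Because $f=T^R$ is a uniformly expanding full-branch map, the return time $R$ has exponential tails and $H\in L^p(\mu_X)$ for every $p\ge1$; standard martingale/Gordin-type bounds as used in \cite{MTorok12,KellyM16} yield $\big\|\max_{k\le n}\big|\sum_{j=0}^{k-1}H\circ f^j\big|\big\|_2\ll n^{1/2}$. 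Since $b_n\sim n^{1/\alpha}$ with $\alpha\in(0,1)$, we have $n^{1/2}=o(b_n)$, so $W_n^H\to_{\mu_X}0$ uniformly, and it remains to show $W_n^Z\to_{\mu_X}\tL_\alpha$ in $\cJ_1$.

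Next I would verify that $Z$ is regularly varying of order $\alpha$ with the symmetric spectral measure $\nu=\tfrac12(\delta_1+\delta_{-1})$. Near each of the four singular points of $Z$, the preimage $\{|x-a|<r\}$ has $\mu_X$-measure proportional to $r$, and the two cylinders contributing positive values (containing $\tfrac{1}{24}$ and $\tfrac{17}{24}$) are matched by two contributing negative values with identical local behaviour. A direct computation then gives $\mu_X(Z>t)=\mu_X(Z<-t)=C t^{-\alpha}$ for some explicit constant $C$, hence $\mu_X(|Z|>t)=2Ct^{-\alpha}$, confirming both the order and the symmetric spectral measure, and pinning down $b_n$ through $b_n\sim(2Cn)^{1/\alpha}$ as in Section~\ref{sec:rv}.

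Finally, since $\alpha\in(0,1)$, \cite[Theorem~1.2]{TyranKaminska10} reduces $W_n^Z\to_{\mu_X}\tL_\alpha$ in $\cJ_1$ to convergence of the random point measures
\[
\cN_n=\sum_{j=1}^n\delta_{(\frac{j}{n},\,b_n^{-1}Z\circ f^{j-1})}
\]
on $(0,1]\times(\R\setminus\{0\})$ to the Poisson point process with intensity $\Leb\times\Pi$, where $\Pi(dr)=\tfrac{\alpha}{2}|r|^{-\alpha-1}dr$ reflects the two-sided spectral measure. This last step is precisely the setting of~\cite[Theorem~4.3]{Freitas2Magalhaes20}: the map $f$ is uniformly expanding with full branches and countably many exponentially shrinking cylinders, while the singular set of $Z$ consists of the four points $\{\tfrac{1}{24},\tfrac{11}{24},\tfrac{17}{24},\tfrac{19}{24}\}$ that sit on preperiodic orbits whose return statistics are classical; the symmetric mixture of positive and negative contributions is built into the theorem via the marked point process formulation. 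The main obstacle I expect is bookkeeping the signs and the four distinct singular centres so as to identify the spectral measure as $\tfrac12(\delta_1+\delta_{-1})$ correctly --- in particular, one must check that no cancellation between positive and negative contributions destroys the heavy-tail structure, which is ensured by the fact that the singularities of $v$ at $\tfrac18$ and $\tfrac38$ lie in distinct sub-cylinders of $X_1$ and therefore contribute to $V$ on disjoint subsets of $X$.
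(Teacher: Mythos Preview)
Your proposal is correct and follows essentially the same approach as the paper: decompose $V=Z+H$ with $Z(x)=\pm(c^{-1}+1)^{-1}(x-a)^{-1/\alpha}1_{X_1}(x)$ and $H=O(R)\in L^p$ for all $p$, use \cite{MTorok12,KellyM16} to discard $W_n^H$, verify that $Z$ is regularly varying of order~$\alpha$ with symmetric spectral measure $\tfrac12(\delta_1+\delta_{-1})$, and then invoke \cite[Theorem~1.2]{TyranKaminska10} together with \cite[Theorem~4.3]{Freitas2Magalhaes20} for the point-process step. The only minor point you gloss over that the paper makes explicit is that $H$ is H\"older (needed for the cited maximal bounds), but this is a routine check given the structure of $V$ and $Z$.
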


\begin{proof}
Write $V=Z+H$ where $Z(x)=\pm (c^{-1}+1)^{-1}(x-a)^{-1/\alpha}1_{X_1}(x)$ and $H=O(R)$.
Since $H$ is H\"older and in $L^p$ for all $p<\infty$, it follows by~\cite{MTorok12} or~\cite[Proposition~7.1]{KellyM16} that
$\big|\max_{k\le n}|\sum_{0\le j\le k}H\circ f^j|\big|_1\ll n^{1/2}$.
Hence it suffices to show that
$W_n^{Z}\to_{\mu_X} \tL_\alpha$ in the $\cJ_1$ topology.

Suppose that $x\in[00]$.
Then $Z(x)>t$ for $t>0$ large if and only if
$\frac{1}{24} < x < \frac{1}{24}+((c^{-1}+1)t)^{-\alpha}$.
A similar estimate holds for $x\in X_1\setminus[00]$, so
$\mu_X(Z>t)=\mu_X(Z<-t)=\frac{9}{7}\Leb(Z<-t)= \frac{18}{7}((c^{-1}+1)t)^{-\alpha}$.
Hence $Z$ is regularly varying of order $\alpha$
and lies in the domain of attraction of a stable law $G_\alpha$ with spectral measure $\nu=\frac12(\delta_1+\delta_{-1})$, and $b_n=(\frac{36}{7})^{1/\alpha}(c^{-1}-1)^{-1}n^{1/\alpha}$ as described in Section~\ref{sec:rv}.
Since $\alpha\in(0,1)$, the result follows (as in the proof of Lemma~\ref{lem:ex3a}) from~\cite[Theorem~1.2]{TyranKaminska10} and~\cite[Theorem~4.3]{Freitas2Magalhaes20}.
\end{proof}

\begin{lemma}  \label{lem:ex5b}
Hypothesis~\eqref{eq:main} is satisfied.
\end{lemma}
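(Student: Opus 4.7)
The plan is to parallel Lemma~\ref{lem:ex3b}, accommodating the oscillating profile $P_1$ and the enlarged $O(\ell)$ error term of Proposition~\ref{prop:vell}. For $x\in X_2$, $R(x)=1$ and both $\xi,\zeta$ are bounded, so I focus on $x\in X_1$. Setting $A(x)=\pm(c^{-1}+1)^{-1}(x-a)^{-1/\alpha}$ with the sign and base point $a$ dictated by the cylinder containing $x$, and defining $g_x(t)=1-(-c)^{[tR(x)]-1}$ for $[tR(x)]\ge 1$ (and $g_x(t)=0$ otherwise), Proposition~\ref{prop:vell} gives
\[
\xi(t)(x)=A(x)g_x(t)+O(R(x)),\qquad \zeta(t)(x)=V(x)P_1(t),
\]
uniformly in $t$. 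Using $V(x)=A(x)(1-(-c)^{R(x)-1})+O(R(x))$ together with $|A(x)(-c)^{R(x)-1}|\lesssim 1$, the second expression becomes $\zeta(t)(x)=A(x)P_1(t)+O(R(x))$. The sign conventions match between $\xi$ and $\zeta$, as follows from $\sgn V=\sgn A$ and Remark~\ref{rmk:zeta}.

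The key step is to bound $d_\tD(g_x,P_1)$. Both functions take values in the sequence $P_1^{(j)}=1-(-c)^j$: the function $g_x$ takes the first $R$ such values $P_1^{(0)},\ldots,P_1^{(R-1)}$ in order on the intervals $[0,2/R),[2/R,3/R),\ldots,\{1\}$, while $P_1$ continues to oscillate toward $1$ through countably many further jumps. Following the strategy of Lemma~\ref{lem:ex3b}, I would choose a reparametrisation $\lambda$ that bijectively sends the first $R-2$ jump times $s_1<\cdots<s_{R-2}$ of $P_1$ to $2/R,\ldots,(R-1)/R$, so that $g_x\circ\lambda$ agrees exactly with $P_1$ on $[0,s_{R-2})$; on the remaining interval $[s_{R-2},1]$, $g_x\circ\lambda$ equals the constant $P_1^{(R-2)}$ while $P_1$ takes values in $\{P_1^{(j)}:j\ge R-2\}\cup\{1\}$, all within distance $2c^{R-2}$ of $P_1^{(R-2)}$. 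This yields $d_\tD(g_x,P_1)\le 2c^{R-2}$. Combined with $|A(x)|\lesssim c^{-R(x)}$ (since $|x-a|\gtrsim 3^{-R(x)}$ on the cylinders of $X_1$, using the expansion $T^\ell x=3^\ell(x-a)+b_\ell(x)$ from the proof of Proposition~\ref{prop:vell}), I obtain $|A(x)|\,d_\tD(g_x,P_1)=O(1)$ uniformly in $x$.

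These estimates combine to give $d_\tD(\xi,\zeta)(x)\ll R(x)+1$. Since $R$ has exponential tails on $X$ and hence lies in $L^1$, Proposition~\ref{prop:ET} gives $n^{-1}\max_{0\le j\le n}R\circ f^j\to 0$ a.e.\ on $(X,\mu_X)$. Because $b_n\asymp n^{1/\alpha}$ with $\alpha\in(0,1)$, the exponent $1-1/\alpha$ is negative, so
\[
b_n^{-1}\max_{0\le j\le n}d_\tD(\xi,\zeta)\circ f^j\ll n^{1-1/\alpha}\Bigl(n^{-1}\max_{0\le j\le n}R\circ f^j\Bigr)\to 0
\]
almost everywhere, verifying~\eqref{eq:main}. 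The main obstacle compared with Lemma~\ref{lem:ex3b} is that the present error term $O(R)$ (rather than $O(1)$) in Proposition~\ref{prop:vell} prevents a uniform bound on $d_\tD(\xi,\zeta)$; one must instead exploit the gap between $b_n\asymp n^{1/\alpha}$ and the ergodic growth rate of $R\circ f^j$, which works precisely because $\alpha<1$.
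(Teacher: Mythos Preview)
Your proof is correct and follows essentially the same approach as the paper: express $\xi$ and $\zeta$ as $A\cdot g_x+O(R)$ and $A\cdot P_1+O(R)$, bound $d_{\tD}(g_x,P_1)$ by $O(c^R)$ via a reparametrisation matching the first $R{-}O(1)$ plateau values, use $|A|\ll c^{-R}$ to absorb this into $O(1)$, and conclude $d_{\tD}(\xi,\zeta)\ll R$ so that Proposition~\ref{prop:ET} gives~\eqref{eq:main}. The only differences from the paper are cosmetic: you invoke Proposition~\ref{prop:ET} with $p=1$ together with the extra factor $n^{1-1/\alpha}\to0$, whereas the paper uses $R\in L^p$ for all $p<\infty$ directly; and your claim that $g_x\circ\lambda$ is \emph{constant} on $[s_{R-2},1]$ overlooks the final jump to $P_1^{(R-1)}$ at $t=1$, but this value is also within $2c^{R-2}$ of the range of $P_1$ there, so the bound is unaffected.
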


\begin{proof}
For $t\in[0,1]$, $x\in X_1$, it follows from the calculations above and Remark~\ref{rmk:zeta} that
\begin{align*}
\xi(t)(x) & =v_{[tR]}(x)= \pm (c^{-1}+1)^{-1}(x-a)^{-1/\alpha}g_x(t)+O(R), \\
\zeta(t)(x) & =|v_R(x)|P_{\sgn V}(t)= \pm (c^{-1}+1)^{-1}(x-a)^{-1/\alpha}P_1(t)+O(R),
\end{align*}
where $g_x(t)=(1-(-c)^{[tR(x)]-1})$,
with the appropriate (and matching) choices of $\pm$ and 
$a\in\{\frac{1}{24},\frac{11}{24},\frac{17}{24},\frac{19}{24}\}$.

For all $x\in X_1$, the
functions $g_x$, $P_1$ are piecewise constant on $[0,1]$.
Moreover, there are intervals $[0,t_0(x)]$, $[0,t_1(x)]$ such that
${g_x}|_{[0,t_0(x)]}$ and ${P_1}|_{[0,t_1(x)]}$ take precisely the same values, namely
$\{0,1+c,1-c^2,\dots,1-(-c)^{R(x)-1}\}$.
Furthermore, ${g_x}|_{[t_0(x),1]}\equiv 1-(-c)^{R(x)-1}$
and $P_1([t_0(x),1)\subset [1-(-c)^{R(x)-1},1]$.
Hence $d_\tD(g_x,P_1)\le c^{R(x)-1}$.
Using again that $(x-a)^{-1/\alpha}\ll c^{-R(x)}$, it follows that
\[
d_\tD\big(\xi(\cdot)(x),\zeta(\cdot)(x)\big)\ll R(x)
\]
on $X_1$ and hence on $X$.
Since $R\in L^p$ for all $p<\infty$,  hypothesis~\eqref{eq:main} follows from Proposition~\ref{prop:ET}.
\end{proof}

\end{example}

\subsection{Examples with bounded observables}
\label{sec:Young}

In this subsection, we consider examples where the underlying dynamical system 
$T:M\to M$ is nonuniformly/hyperbolic expanding with a better-behaved first return map $f=T^R:X\to X$, and the observable $v:M\to\R^d$ is bounded.

We continue to assume the setup at the beginning of Section~\ref{sec:main}
with integrable return time $R$.
Moreover, we assume that there is a finite disjoint collection $\{X_i,\,i\in\cI\}$ of subsets of $X$ ($\cI\neq\emptyset$) such that
$\mu_X(R1_{X_i}>t)=c_i\mathcal{l}(t)t^{-\alpha}$, $c_i>0$, for each $i\in\cI$,
where $\alpha\in(1,2)$ and $\mathcal{l}:(0,\infty)\to(0,\infty)$ is continuous and slowly varying.
(In particular, $R\in L^1$ and $R\not\in L^2$.) 

Let $P_i:[0,1]\to\R^d$ be a finite collection of H\"older continuous profiles with
$P_i(0)=0$ and
$\omega_i=P_i(1)\in\bbS^{d-1}$ distinct.
For notational convenience, suppose that $0\not\in\cI$.
Set $X_0=X\setminus\bigcup_{i\in\cI}X_i$.
(It is permitted that $X_0=\emptyset$.) 

Let $v:M\to\R^d$ be an $L^\infty$ observable with $\int_M v\,d\mu=0$.
We assume that there exists $\eta>0$ and 
nonnegative $H\in L^p(X)$ for some $p>\alpha$ such that
\begin{equation} \label{eq:vl}
v_\ell=\sum_{i\in\cI}\{\lambda_i P_i(\ell/R)R +O(R^{1-\eta})\} 1_{X_i} +O(H) 1_{X_0} ,\quad \ell=0,1,\dots,R,
\end{equation}
where $\lambda_i\in\R$, $\lambda_i\neq0$. 

In particular, 
\[
V=v_R =\sum_{i\in\cI}\{\lambda_i \omega_i R +O(R^{1-\eta})\} 1_{X_i} +O(H)1_{X_0}.
\]
Regular variation of $V$ reduces to regular variation of
$Z=\sum_{i\in\cI}\lambda_i \omega_i R 1_{X_i}$ and we deduce that
$V$ is regularly varying with order $\alpha$ and spectral measure
\begin{equation} \label{eq:nu}
\nu=\Big(\sum_{i\in\cI} c_i |\lambda_i|^\alpha\Big)^{-1}\sum_{i\in\cI} c_i |\lambda_i|^\alpha \delta_{\omega_i}.
\end{equation} 
Let $\tL_\alpha$ denote the L\'evy process with spectral measure $\nu$.

Note that $\mu_X(|V|>t)\sim \mu_X(|Z|>t)\sim \mathcal{l}(t)\sum_{i\in\cI}c_i\lambda_i^\alpha t^{-\alpha}$. 
Hence we
choose $b_n\sim (n\mathcal{l}(b_n)\sum c_i\lambda_i^\alpha)^{1/\alpha}$.

In many examples, as described below, it can be verified that~\eqref{eq:vl} holds and that $W_n^V\to_{\mu_X}\tL_\alpha$
(equivalently $W^Z_n\to_{\mu_X} \tL_\alpha$) in the $\cJ_1$ topology.
Hence, to apply Theorem~\ref{thm:main}, it remains to verify hypothesis~\eqref{eq:main}. This we do now.

\begin{prop} \label{prop:vl}
Hypothesis~\eqref{eq:main} is satisfied.
\end{prop}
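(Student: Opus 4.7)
The plan is to bound $d_\tD(\xi,\zeta)$ pointwise on $X$, split according to the partition $X = X_0 \sqcup \bigsqcup_{i\in\cI} X_i$, and then convert this into the required convergence of the maximum using Proposition~\ref{prop:ET} together with the tail behaviour of $R$.

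First I would analyse $\xi$ and $\zeta$ on $X_i$ for $i\in\cI$. From $\xi(t) = v_{[tR]}$ and~\eqref{eq:vl} one obtains $\xi(t) = \lambda_i R\, P_i([tR]/R) + O(R^{1-\eta})$. Similarly $V = \lambda_i R\,\omega_i + O(R^{1-\eta})$, so $V/|V| = \sgn(\lambda_i)\omega_i + O(R^{-\eta})$; by distinctness of the $\omega_i$, for $R$ large enough $\Pi(V)$ coincides with $P_i$. Using $P_i(1)=\omega_i$, the definition of $\zeta$ simplifies to $\zeta(t) = |V|P_i(t) + O(R^{1-\eta}) = \lambda_i R\, P_i(t) + O(R^{1-\eta})$. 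Subtracting and invoking H\"older continuity of $P_i$ with some exponent $\gamma\in(0,1]$ gives the uniform estimate
\[
\sup_{t\in[0,1]}|\xi(t)-\zeta(t)| \ll R\cdot R^{-\gamma} + R^{1-\eta} \ll R^{1-\delta}, \qquad \delta := \min(\gamma,\eta)>0,
\]
and hence $d_\tD(\xi,\zeta) \ll R^{1-\delta}$ on $X_i$ (by taking $\lambda=\mathrm{id}$). On $X_0$, formula~\eqref{eq:vl} gives both $\xi(t)$ and $V$ of order $O(H)$, so $\zeta(t) = O(H)$ (the profiles are bounded on $[0,1]$) and therefore $d_\tD(\xi,\zeta) \ll H$ on $X_0$. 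Combining, $d_\tD(\xi,\zeta) \ll R^{1-\delta} + H$ on $X$.

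To conclude, I would choose $q > \alpha$ such that both $R^{1-\delta}$ and $H$ lie in $L^q(X,\mu_X)$. This is possible: the regular variation of $R$ with index $\alpha$ gives $R\in L^r$ for every $r<\alpha$, hence (shrinking $\delta$ if necessary) $R^{1-\delta}\in L^q$ for some $q$ slightly above $\alpha$, and the assumption $H\in L^p$ with $p>\alpha$ allows us to take $q\in(\alpha,\min(p,\alpha/(1-\delta)))$. Proposition~\ref{prop:ET} then yields $n^{-1/q}\max_{0\le j\le n}(R^{1-\delta}+H)\circ f^j \to 0$ a.e.\ on $X$. Since $b_n \sim (n\mathcal{l}(b_n)\sum_{i}c_i|\lambda_i|^\alpha)^{1/\alpha}$ and $q>\alpha$, up to slowly varying factors $n^{1/q}=o(b_n)$, so $b_n^{-1}\max_{0\le j\le n} d_\tD(\xi,\zeta)\circ f^j \to 0$ almost everywhere on $(X,\mu_X)$, which is stronger than the convergence in probability required by~\eqref{eq:main}.

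The main obstacle is the asymptotic identification $\Pi(V)=P_i$ on $X_i$: it relies on the quantitative direction estimate $V/|V|=\sgn(\lambda_i)\omega_i+O(R^{-\eta})$ together with distinctness of the $\omega_i$, and so genuinely only holds on $\{R\ge R_0\}\cap X_i$ for a suitable threshold $R_0$. On the complementary set, $\xi$ and $\zeta$ are themselves uniformly bounded, contributing a harmless additive $O(1)$ which is absorbed into $R^{1-\delta}$. Everything else is a routine H\"older/triangle-inequality computation and an appeal to the pointwise ergodic theorem via Proposition~\ref{prop:ET}.
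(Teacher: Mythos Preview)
Your proposal is correct and follows essentially the same route as the paper: bound $\sup_{[0,1]}|\xi-\zeta|$ by $R^{1-\delta}+H$ using the H\"older continuity of the profiles together with the direction estimate $V/|V|\approx\omega_i$ on $X_i$ (absorbing the small-$R$ exceptional set into the $O(R^{1-\delta})$ term), then apply Proposition~\ref{prop:ET} and the comparison $b_n\gg n^{1/q}$ for some $q>\alpha$. The paper organises the identification $\Pi(V)=P_i$ slightly more formally via two index maps $J_1,J_2:X'\to\cI$ and the inclusion $\{J_1\neq J_2\}\subset\{R<c_1\}$, but this is exactly your threshold argument in different notation.
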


\begin{proof}
We claim that there exists $\eta>0$ such that
$\sup_{[0,1]}|\xi-\zeta|\ll R^{1-\eta}+H$.
Choose $p>\alpha$ such that $R^{1-\eta}+H\in L^p$.
By Proposition~\ref{prop:ET},
\[
n^{-1/p}\max_{0\le j\le n}\sup_{[0,1]}|\xi-\zeta|\circ f^j\to0 \;a.e.
\]
But $b_n\gg n^{1/p}$, so $b_n^{-1}\max_{0\le j\le n}\sup_{[0,1]}|\xi-\zeta|\circ f^j\to0$ a.e.\ and the result follows.

It remains to prove the claim. 
On $X_0$, it is clear that $\xi(t)=O(H)$ and $\zeta(t)=O(H)$ for all $t$, 
so it suffices to work on $X'=\bigcup_{i\in\cI}X_i$.

Define $J_1,J_2:X'\to\cI$ by setting 
$J_1=i$ if $V/|V|$ is closest to some $\omega_i$ and $J_1=1$ otherwise. Let $J_2|_{X_i}=i$.
By definition of $J_1$ and $J_2$, there exists $c_0>0$ such that
\[
X'\cap\{J_1\neq J_2\}\subset \bigcup_{i\in\cI}\Big\{x\in X_i:\big|\tfrac{v_R(x)}{|v_R(x)|}-\omega_i\big|>c_0\Big\}.
\]
For $i\in\cI$, by~\eqref{eq:vl}, 
$1_{X_i}v_R=\lambda_i\omega_iR +O(R^{1-\eta})$,
so
\(
1_{X_i}\tfrac{v_R}{|v_R|}=\omega_i+O(R^{-\eta}).
\)
Hence there exists $c_1>0$ such that
\(
X'\cap \{J_1\neq J_2\} \subset \{R<c_1\}.
\)
In particular, $|v_R|1_{X'}1_{\{J_1\neq J_2\}}\le |v|_\infty c_1$. 
Altogether, $|V|1_{X'}1_{\{J_1\neq J_2\}}=O(R^{1-\eta})$.

In addition, by~\eqref{eq:vl}, 
\(
|V-|V|P_{J_2}(1)|= |V-|V|\omega_i| = O(R^{1-\eta})
\)
on $X_i$, and so
\begin{align*}
\zeta(t)  = |V| P_{J_1}(t)  +t\big\{ V-|V| P_{J_1}(1)\big\}
& = |V| P_{J_2}(t)  +t\big\{ V-|V| P_{J_2}(1)\big\}+O(R^{1-\eta})
\\ & = |V|  P_i(t)+  O(R^{1-\eta})
 =\lambda_i  P_i(t) R+O(R^{1-\eta}).
\end{align*}

Shrinking $\eta$ if necessary, we can suppose that each profile $P_i$ is $C^\eta$.
Given $t\in[0,1]$, write $t=\frac{\ell}{R}+s$ where $\ell\ge0$ is an integer and $s\in[0,\frac{1}{R})$.
On $X_i$,
\begin{align*}
\xi(t)  =v_{[tR]}=v_\ell
 &=\lambda_i P_i(t) R +(P_i(\ell/R)-P_i(t))R+O(R^{1-\eta})
\\ &=\lambda_i P_i(t) R+O(R^{1-\eta}).
\end{align*}
Hence
$\sup_{[0,1]}|\xi-\zeta|=O(R^{1-\eta})$ on $X'$ completing the proof of the claim.
\end{proof}

\begin{example}[Example~\ref{ex:4} revisited] \label{ex:4+}
We return to the example of an intermittent map
$T:M\to M$, $M=[0,1]$, with finitely many neutral fixed points
$x_1,\dots,x_k$ of neutrality $\alpha_1,\dots,\alpha_k$ where
$\min\alpha_j=\alpha_1=\alpha\in(1,2)$.

We induce on a set $X\subset M$ bounded away from the neutral fixed points so
that $f=T^R:X\to X$ is a full-branch Gibbs-Markov map (uniformly expanding with bounded distortion) and so that $X=X_1\cup\dots\cup X_k$ where
each $X_j$ is a union of partition elements for $f$ and
trajectories in $X_j$ pass close to $x_j$ before returning to $X$.
Then $R$ is regularly varying of order $\alpha$ and
$\mu(R1_{X_j}>t)\sim c_j t^{-\alpha_j}$ for some $c_j>0$. In particular,
$R1_{X_j}\in L^q$ for all $q<\alpha_j$.

Let $v:M\to\R^d$ be H\"older with mean zero such that $v(x_1)\neq0$.
A calculation as in~\cite{Gouezel04,MZ15,CFKM20} shows that on $X_j$,
\[
v_\ell = \ell v(x_j) +O(R^{1-\eta}), \quad 0\le \ell<R,
\]
for some $\eta>0$.
In particular, if $\alpha_j>\alpha$ or $v(x_j)=0$, then
$v_\ell 1_{X_j}\in L^p$ for some $p>\alpha$.

Define $\cI$ to be the set of indices $i\in\{1,\dots,k\}$ with $\alpha_i=\alpha$ and $v(x_i)\neq0$. For $i\in\cI$, write $v(x_i)=\lambda_i\omega_i$ where $\lambda_i>0$ and
$\omega_i\in\bbS^{d-1}$.
Combining the sets $X_i$ with common value of $\omega_i$, we can suppose without loss that the $\omega_i$, $i\in\cI$, are distinct.
Define $P_i(t)=t \omega_i$.
Then
\[
v_\ell  
 = \sum_{i\in\cI}\{ \lambda_i P_i(\ell/R)R +O(R^{1-\eta})\}1_{X_i} +O(H)1_{X_0}, \quad 0\le \ell<R,
\]
where $H\in L^p$ for some $p>\alpha$.

Hence, we have verified~\eqref{eq:vl}, so hypothesis~\eqref{eq:main} holds by
Proposition~\ref{prop:vl}.
Moreover,
\[
V=v_R=Z+H', \qquad 
Z=\sum_{i\in\cI}\lambda_i\omega_i R 1_{X_i},
\]
where $H'\in L^p$ for some $p>\alpha$.
Hence, $V$ is regularly varying with order $\alpha$ and spectral measure $\nu$ given by~\eqref{eq:nu}, and we take $b_n=(\sum_{i\in\cI}c_i\lambda_i^\alpha)^{1/\alpha}n^{1/\alpha}$.
Since $Z$ is regularly varying and piecewise constant, it follows as
in~\cite{TyranKaminska10,MZ15} for $d=1$ and~\cite{CFKM20} for $d\ge1$ that
$W_n^Z\to_{\mu_X} \tL_\alpha$ in the $\cJ_1$ topology.
Since $H'\in L^p$, it follows from Proposition~\ref{prop:ET} that $W_n^V\to_{\mu_X} \tL_\alpha$ in the $\cJ_1$ topology.
This completes the verification of the hypotheses of Theorem~\ref{thm:main}.
\end{example}

\begin{example}[Billiards with flat cusps revisited] \label{ex:cusps+}
Finally, we return to the example of billiards with flat cusps described in Section~\ref{sec:cusps}.
Following~\cite{JungPeneZhang20}, we induce on a set $X\subset M$ bounded away from the flat cusps and so that $X=X_1\cup\dots\cup X_k$ where trajectories in $X_j$ pass close to the $j$'th cusp before returning to $X$.
Given a H\"older mean zero observable $v:M\to\R^d$, we define the profiles $P_i$, $i\in\cI$, corresponding to flattest cusps as in Section~\ref{sec:cusps}.

For verification of~\eqref{eq:vl}, we refer to~\cite[Proposition~8.1]{MV20}.
(The calculation there is written in the case $d=1$, but extends immediately to $d\ge2$.)

Convergence of $W_n^V$ is more difficult than in the previous examples since the induced map $f=T^R:X\to X$ has unbounded distortion. Instead, it is nonuniformly hyperbolic with exponential tails in the sense of~\cite{Young98}.
Convergence of $W_n^V$ in the $\cJ_1$ topology is proved in~\cite{JungPeneZhang20} for $d=1$ and in~\cite[Section~5]{CKMprep} for $d\ge1$. 

Unlike in Example~\ref{ex:4+}, we generally require that the vectors $\omega_i=P_i(1)$ are distinct at distinct flattest cusps, since the profiles $P_i$ are typically different. (In Example~\ref{ex:4+}, each profile $P_i$ was determined by $\omega_i$.) Also, we can no longer disregard flattest cusps with $P_i(1)=0$ since $P_i$ could still be nontrivial. These issues require further attention and are the subject of work in progress.
\end{example}

 \paragraph{Acknowledgements}
 ACMF, JMF and MT were partially supported by FCT projects PTDC/MAT-PUR/4048/2021 and 2022.07167.PTDC, with national funds, and by CMUP, which is financed by national funds through FCT -- Funda\c{c}\~ao para a Ci\^encia e a Tecnologia, I.P., under the project with reference UIDB/00144/2020.

IM and MT are grateful to hospitality at the University of Porto
 where part of this work was done.

\end{document}